\newcommand{\pic}[2]{\raisebox{-.5\height}
{\includegraphics[scale=#2]{#1}}}
\def\nestedcircles{\pic{nestedcircles}{.200}}
\def\NoPossible{\pic{NoPossible}{.250}}
\def\friedeggs{\pic{friedeggs}{.150}}
\def\friedeggswithpan{\pic{friedeggswithpan}{.150}}
\def\BIBiIIDegg{\pic{BIBiIIDegg}{.250}}
\def\BIIDeggBiIIDpan{\pic{BIIDeggBiIIDpan}{.200}}
\def\BIIDpanBiIIDpan{\pic{BIIDpanBiIIDpan}{.200}}
\def\ExampleLink{\pic{ExampleLink}{.200}} 
\def\ExampleSeifertCircles{\pic{ExampleSeifertCircles}{.450}} 
\def\ExampleSeifertGraph{\pic{ExampleSeifertGraph}{.450}} 
\def\ExampleBlocksSeifertGraph{\pic{ExampleBlocksSeifertGraph}{.450}} 
\def\ExampleProjectionSurface{\pic{ExampleProjectionSurface}{.300}} 
\def\Blockgraph{\pic{Blockgraph}{.250}}
\def\enfrentados{\pic{enfrentados}{.300}}
\def\dualchico{\pic{dualchico}{.380}}
\def\consecutiveedges{\pic{consecutiveedges}{.400}}
\def\GenusTwoHomogeneousGraphOneBlock{\pic{GenusTwoHomogeneousGraphOneBlock}{.250}}
\def\GenusTwoHomogeneousGraphTwoBlocks{\pic{GenusTwoHomogeneousGraphTwoBlocks}{.250}}
\def\ConfigurationsSeifertGraphsTwoBlocks{\pic{ConfigurationsSeifertGraphsTwoBlocks}{.200}}
\def\OneBlockNoKnot{\pic{OneBlockNoKnot}{.200}}
\def\OneBlockYesKnot{\pic{OneBlockYesKnot}{.200}}
\def\ClaimExtra{\pic{ClaimExtra}{.200}}
\def\SpecialAlternatingDiagram{\pic{SpecialAlternatingDiagram}{.200}}
\def\FormOfB{\pic{FormOfB}{.200}}
\newtheorem{theorem}{Theorem}
\newtheorem{corollary}[theorem]{Corollary}
\newtheorem{lemma}[theorem]{Lemma}
\newtheorem{remark}{Remark}
\newtheorem{example}{Example}
\renewenvironment{proof}[1][Proof]{\textit{#1.} }
{\hfill \rule{0.5em}{0.5em}}
\newcommand{\Z}{Z\!\!\!Z}
\begin{document}

\title{Homogeneous links and the Seifert matrix \footnote{To appear in Pacific Journal of Mathematics}}
\author{P. M. G. Manch\'on}
\maketitle

\vspace{-0.5cm}

\begin{abstract}
Homogeneous links were introduced by Peter Cromwell, who proved that the projection surface of these links, that given by the Seifert algorithm, has minimal genus. Here we provide a different proof, with a geometric rather than combinatorial flavor. To do this, we first show a direct relation between the Seifert matrix and the decomposition into blocks of the Seifert graph. Precisely, we prove that the Seifert matrix can be arranged in a block triangular form, with small boxes in the diagonal corresponding to the blocks of the Seifert graph. Then we prove that the boxes in the diagonal has non-zero determinant, by looking at an explicit matrix of degrees given by the planar structure of the Seifert graph. The paper contains also a complete classification of the homogeneous knots of genus one.
\end{abstract}

\section{Introduction} \label{introS}
Throughout this paper, we assume that all links and diagrams are oriented. Let~$F$ be a spanning surface for an oriented link $L$, and let $b:F \times [0,1] \rightarrow \mathbb{R} ^3$ be a regular neighbourhood. Identify $F$ with $F \times \{0\}$. The associated Seifert matrix $M=(a_{ij})_{1\leq i,j \leq n}$ with order $n$ is defined by the linking number $a_{ij}={\rm lk}(a_i,a_j^+)$ where the $a_i$'s are simple closed oriented curves in $F$ whose homology classes form a basis ${\cal B}$ of $H_1(F)$, and $a_i^+=b(a_i \times 1)$ is the lifting of $a_i$ out of $F$, in $F\times \{ 1\}$. Then $n={\rm rk }(H_1(F))=2g(F)+\mu -1=1-\chi(F)$ where $g(F)$ and $\chi(F)$ are the genus and the Euler characteristic of $F$ respectively, and $\mu$ is the number of components of the link. Homology with coefficients in $\Z$ is assumed along this paper.

\vspace{0.05cm}

Let $\nabla _L(z)$ and $\Delta _L(x)$ be the Conway and Alexander polynomials of $L$, in variables $z$ and $x$ respectively, as defined in \cite{Peter}. Since $\nabla _L(z)=\Delta _L(x)={\rm det}(xM-x^{-1}M^t)$ after the substitution $z=x^{-1}-x$, we have that the coefficient $c$ of the highest degree term in $\nabla _L(z)$ is equal to $(-1)^n{\rm det} (M)$ and the degree of $\nabla_L(z)$ is $n$, whenever ${\rm det}(M)$ does not vanish. In general ${\rm deg}(\nabla_L(z)) \leq n$, which provides the famous lower bound of the genus ${\rm deg}(\nabla _L(z))-\mu +1 \leq 2g(F)$ and in particular it allows to deduce that $F$ is a minimal genus spanning surface for $L$ if ${\rm det}(M)$ does not vanish.

\vspace{0.05cm}

Suppose now that the spanning surface $F$ has been constructed by applying the Seifert algorithm to a diagram $D$ of the link $L$. We briefly summarize the main features of this construction: start with a diagram $D$ in the $xy$-plane. For each Seifert circle $\alpha$ a Seifert disc $a$ is built in the plane $z=k$, if there are exactly $k$ Seifert circles that contain $\alpha$; we say that the height of $a$ is $k$ and write $h(a)=k$. This collection of discs lives in the upper half-space $\mathbb{R} ^3_+$ and they are stacked in such a way that when viewed from above, the boundary of each disc is visible. To complete the projection surface, insert small twisted rectangles (called bands from now on) at the site of each crossing, choosing the half-twist according to the corresponding crossing. Following Cromwell \cite{Peter}, we call $F$ a projection surface. 

\vspace{0.05cm}

We can now define a graph $G$ contained in $F$ as follows: take a vertex in each Seifert disc of $F$ and, if two discs are joined by a band, join the corresponding vertices by an edge contained in the band. In addition, we label the edge with the sign of the associated crossing in the diagram $D$. This graph, called the Seifert graph of $D$, is in fact a planar graph. The rank ${\rm rk }(G)$ of $G$, as defined in graph theory, is one minus the number of vertices plus the number of edges. Since $\chi(F)=s(D)-c(D)$ where $s(D)$ is the number of Seifert circles and $c(D)$ is the number of crossings of $D$, it follows that ${\rm rk }(G)={\rm rk }(H_1(F))$.

\vspace{0.05cm}

In general, we can consider the decomposition $G=B_1 \cup \dots \cup B_k$ of the graph $G$ into its blocks, which are the maximal connected subgraphs without cut vertices. The part of the projection surface (bands and Seifert discs) that corresponds to a block $B_i$ is a submanifold of $F$ and will be denoted by $F_{B_i}$, or simply $F_i$. The graph $G$ is a deformation retract of the surface $F$, taking $F_i$ onto $B_i$; in particular $H_1(F) \cong H_1(G)$ taking $H_1(F_i)$ onto $H_1(B_i)$ and ${\rm rk }(G)={\rm rk }(H_1(G))$, an equality which is sometimes taken as a definition. Now, a basis of $H_1(G)$, hence a basis ${\cal B}$ of $H_1(F)$, can be obtained by juxtaposing basis ${\cal B}_i$ of $H_1(B_i)$, since the cycles in $G$ are precisely the cycles of its blocks (\cite{Diestel}, lemma 3.1.1). In particular, the rank of $G$ is the sum of the ranks of its blocks.

\vspace{0.05cm}

Let $M_i$ be the Seifert matrix defined by any basis ${\cal B}_i$ of $H_1(B_i)$ (hence of $H_1(F_i)$), $i=1, \dots , k$. Our main result is the following:

\vspace{0.05cm}

\noindent {\bf Theorem 4.} {\it Let $D$ be a connected diagram of an oriented link $L$, $G$ the corresponding Seifert graph and $G=B_1 \cup \dots \cup B_k$ the decomposition of $G$ into~blocks. Then there is an order in the set of blocks of $G$ for which the Seifert matrix for the projection surface is upper block triangular. More precisely, if $M_i$ is the Seifert matrix that corresponds to any basis ${\cal B}_i$ of $H_1(B_i)$, $i = 1, \dots , k$, there exists a permutation $\sigma \in S_k$ such that the Seifert matrix adopts the following form:
$$\bordermatrix{
                \ & {\cal B}_{\sigma (1)}^+ & {\cal B}_{\sigma (2)}^+ & \dots  & {\cal B}_{\sigma (k)}^+ \cr 
   {\cal B}_{\sigma (1)} & M_{\sigma (1)}   & 0                & \dots  & 0                \cr
   {\cal B}_{\sigma (2)} & *                & M_{\sigma (2)}   & \ddots & \vdots           \cr
   \ \ \vdots         & \vdots           & \ddots           & \ddots & 0                \cr
   {\cal B}_{\sigma (k)} & *                & \dots            & *      & M_{\sigma (k)}
}$$
}
\vspace{0.05cm}

A link is homogeneous if it has a homogeneous diagram, which is a diagram in which all the edges of each block of its Seifert graph have the same sign. Alternating and positive diagrams (links) are homogeneous diagrams (links). The knot $9_{43}$ is an example of homogeneous link which is neither positive nor alternating. Homogeneous links were introduced in 1989 by Cromwell \cite{PeterPaper}. In Knot Theory the adjective homogeneous was first applied to a certain class of braids by  Stallings \cite{Stallings}. Certainly, the closure of a homogeneous braid is a homogeneous diagram, although there are homogeneous links which cannot be presented as the closure of a homogeneous braid, just as there are alternating links which cannot be presented as the closure of alternating braids. In \cite{PeterPaper} (see also \cite{Peter}) Cromwell proved the following basic result on homogeneous links:

\vspace{0.1cm}

\noindent {\bf Theorem (Cromwell).} {\it Let $D$ be a connected homogeneous diagram of an oriented homogeneous link $L$ and $G$ the corresponding Seifert graph. Then the highest degree of $\nabla _L(z)$ is the rank of $G$. Furthermore, let $G=B_1 \cup \dots \cup B_k$ be the decomposition of $G$ into blocks and $M_i$ the corresponding Seifert matrices, $i=1, \dots , k$. Then ${\rm det}(M_i)\not= 0$ for all $i=1,\dots , k$ and the leading coefficient of $\nabla _L(z)$ is 
$$
\prod_{i=1}^k \epsilon _i ^{r_i} |{\rm det}(M_i)|
$$
where $\epsilon_i$ is the sign of the edges in $B_i$ and $r_i={\rm rk }(B_i)$.}

\vspace{0.1cm}

\noindent {\bf Corollary.} {\it A projection surface constructed from a connected homogeneous diagram of an oriented link is a minimal genus spanning surface for the link.}

\vspace{0.1cm}

Cromwell's proof is based on a previous construction of a specific resolving tree for calculating the Conway polynomial (\cite{Peter}, Lemma 7.5.1). This means that no crossing is switched more than once on any path from the root of the tree to one of its leaves. The skein relation is then considered, at both the level of the diagram and the corresponding Seifert graph, having in mind that to obtain terms involving powers of $z$ when resolving the resolution tree, a crossing must be smoothed in the diagram $D$, or equivalently, an edge must be deleted from the graph $G$. A direct proof of the corollary has been recent and independently suggested by M. Hirasawa. The proof, outlined in a paper by Tetsuya Abe \cite{Abe} (see also \cite{Ozawa}), is strongly based on a difficult result by Gabai \cite{Gabai2}, which states that the sum of Murasugi of minimal genus surfaces is a minimal genus surface. Hirasawa applies this result to the portions $F_i$ above defined. 

\vspace{0.1cm}

In this paper we give a different proof of Cromwell's theorem, based on the close relation between the Seifert matrix and the decomposition into blocks of the Seifert graph stated in Theorem~\ref{DiagonalMatrixTheorem}. The key point is the understanding of how the parts of the projection surface corresponding to the blocks are geometrically positioned among them. We remark that Theorem~\ref{DiagonalMatrixTheorem} can be useful even in the case in which the diagram is not homogeneous. A special case was already considered by Melvin and Morton in their work on fibred knots of genus two formed by plumbing Hopf bands \cite{Morton}. We deal with this topic in Section~\ref{triangularS}. 

\vspace{0.05cm}

Since a homogeneous block of the Seifert graph corresponds to an alternating diagram, it follows that each little box in the diagonal of the Seifert matrix has non-zero determinant, according to the work by K. Murasugi \cite{MurasugiI&II} (see also \cite{Murasugi}) and independently Crowell \cite{Crowell}. Murasugi's proof was accomplished by working on the Alexander matrix of the Dehn presentation of the link, while Crowell worked with the Wirtinger presentation of the fundamental group. In this paper we will prove this result, second ingredient of our argument, by looking at an explicit matrix of degrees which uses the planar structure of the Seifert graph (Theorem~\ref{MatrixPropertiesTheorem}). This will be treated in Section~\ref{boxS}.

\vspace{0.1cm}

Finally, Section~\ref{ClassificationS} contains a complete classification of genus one homogeneous knots.

\section{An order for the blocks and the Seifert matrix}\label{triangularS}
The main achievement of this paper is to prove that there is a certain ordered basis of the first homology group of the projection surface for which the Seifert matrix has a block triangular form. We need first to prove that, in a certain sense, there are only two types of blocks, or more precisely, there are only two possible configurations for the portions $F_B$ associated to a block $B$.

Let $a,b$ be two Seifert discs. We say that $a$ contains $b$ (write $a \supset b$) if the projection onto the $xy$-plane of $a$ contains that of $b$. Equivalently, the Seifert circle associated to $a$ contains that associated to $b$, in the $xy$-plane.
\begin{figure}[ht!]
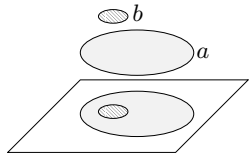

\labellist
\small
\pinlabel {$a$} at 390 200
\pinlabel {$b$} at 260 280
\endlabellist
  \begin{center}
\nestedcircles
  \end{center}
\caption{Nested circles: $a$ contains $b$}\label{nestedcirclesF}
\end{figure}

\begin{remark}\label{Nota}
If we project the projection surface onto the $xy$-plane, the only self-intersections of its boundary are given by the crossings of the original diagram $D$, and they are produced by the half-twists of the bands.
\end{remark}

In particular the arrangement in Figure \ref{NoPossibleF} is not possible, and the following result follows:
\begin{lemma}\label{lema}
Let $a,b$ be two Seifert discs connected by a band. Then exactly one of the following three statements holds:
\begin{enumerate}
    \item $a \supset b$ and $h(b)=h(a)+1$.
    \item $b \supset a$ and $h(a)=h(b)+1$.
    \item $h(a)=h(b)$.
\end{enumerate}
\end{lemma}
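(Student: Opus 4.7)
My plan is to let $\alpha$ and $\beta$ denote the Seifert circles bounding the discs $a$ and $b$. Since $a$ and $b$ are joined by a band, there is a crossing of $D$ whose two strands, after Seifert smoothing, become a pair of arcs lying in $\alpha$ and in $\beta$ respectively. Consequently, in an arbitrarily small neighbourhood $U$ of that crossing site, $\alpha$ and $\beta$ run parallel, separated only by the (thin) projection of the band. The three cases in the statement are manifestly mutually exclusive, so all that is needed is to show that one of them holds, which I do by analyzing the nesting of $\alpha$ and $\beta$ as disjoint simple closed curves in the plane.

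I would first dispose of the nested case. Suppose without loss of generality that $\alpha \supset \beta$; I claim $h(b) = h(a)+1$. Since every Seifert circle that contains $\alpha$ also contains $\beta$, and $\alpha$ itself contains $\beta$, the inequality $h(b) \geq h(a) + 1$ is immediate. For the reverse, it suffices to rule out the existence of a Seifert circle $\gamma$ with $\alpha \supsetneq \gamma \supsetneq \beta$. Such a $\gamma$ would globally separate $\alpha$ from $\beta$, yet inside $U$ the arcs of $\alpha$ and $\beta$ are contiguous across the projected band; hence $\gamma$ would have to pass between them inside $U$, meeting the projection of the band. This is exactly the forbidden arrangement of Figure~\ref{NoPossibleF}, and it would force a self-intersection of $\partial F$ at a point of $U$ that is not a crossing of $D$, contradicting Remark~\ref{Nota}. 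Thus (1) holds (and symmetrically (2) if $\beta \supset \alpha$).

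Now suppose $\alpha$ and $\beta$ are not nested, so neither contains the other. I want to show $h(a) = h(b)$, which will establish (3). The strategy is to prove that a Seifert circle $\gamma \neq \alpha, \beta$ contains $\alpha$ if and only if it contains $\beta$; this immediately yields the equality of heights. If $\gamma \supset \alpha$ but $\gamma \not\supset \beta$, then $\gamma$ separates $\alpha$ from $\beta$ globally (since $\beta$ is disjoint from $\gamma$ and not inside it), so, as in the previous paragraph, $\gamma$ must pass through $U$ between the two parallel arcs of $\alpha$ and $\beta$. Again this is the configuration of Figure~\ref{NoPossibleF}, and it violates Remark~\ref{Nota}. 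The symmetric implication is identical.

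The only real obstacle is to argue cleanly that a Seifert circle which separates $\alpha$ from $\beta$ globally must, by disjointness and the Jordan curve theorem, meet the band region near the crossing, and thereby create an illegitimate self-intersection of $\partial F$; this is precisely the content I am borrowing from Remark~\ref{Nota} together with the excluded picture in Figure~\ref{NoPossibleF}. Once that is in hand, the three cases above exhaust the nesting alternatives and deliver the trichotomy asserted by the lemma.
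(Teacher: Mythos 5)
Your argument is correct and is precisely the elaboration the paper intends: the paper's ``proof'' consists of the single observation that Remark~\ref{Nota} forbids the arrangement of Figure~\ref{NoPossibleF} (a Seifert circle passing between two band-connected circles) and then leaves the details to the reader, and your reduction of both the nested and non-nested cases to the impossibility of a separating circle crossing the band is exactly those details. No gaps.
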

\begin{figure}[ht!]
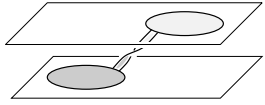

  \begin{center}
\NoPossible
  \end{center}
\caption{This arrangement of the Seifert discs is not possible}\label{NoPossibleF}
\end{figure}

The proof is easy and left to the reader. Now, we can prove that there are basically two types of blocks. Precisely:

\begin{theorem}\label{TwoTypesTheorem}
Let $D$ be a diagram, $F$ its projection surface and $G$ the corresponding Seifert graph. Then all the Seifert discs associated to a block of $G$ have the same height, except possibly one of them which contains all the other, being its height one less.
\end{theorem}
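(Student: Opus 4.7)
The plan is to exploit Lemma~1 together with the planar containment structure of Seifert circles. Because Seifert circles are always disjoint or strictly nested in the $xy$-plane, the set of Seifert discs forms a containment forest whose nodes at depth $k$ are precisely the discs of height $k$. Lemma~1 says that each edge of $G$ either (i) joins a parent to one of its children, or (ii) joins two discs of the same height. In case (ii), Remark~1 lets one draw a small arc from one disc to the other crossing the band and meeting no other Seifert circle; this forces the two discs to have exactly the same set of enclosing Seifert circles, so they are siblings in the containment forest. Consequently, the two endpoints of any edge of $G$ share the same ancestor at every height strictly less than the minimum of their heights.

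Let $h_{\min}$ be the minimum height attained by a disc of $B$, and for every $v \in B$ with $h(v) > h_{\min}$ let $A(v)$ denote the unique ancestor of $v$ at height $h_{\min}$ in the containment forest. The previous observation yields the propagation principle: along any path in $B$ whose vertices all have height $> h_{\min}$, the function $A(\cdot)$ is constant; and analogously, along any path whose vertices all have height $> \ell$, the ancestor at height $\ell$ is constant, for every $\ell \ge h_{\min}$.

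With this tool I treat two cases. Suppose first that $B$ contains at least two discs at height $h_{\min}$, and that there exists $b \in B$ with $h(b) > h_{\min}$ (for contradiction). Pick a min-height vertex $q \in B$ with $q \neq A(b)$ (possible by hypothesis). Any path in $B$ from $b$ to $q$ must eventually reach height $h_{\min}$; the propagation principle forces the first such vertex on the path to equal $A(b)$. Hence $A(b) \in B$, and every path from $b$ to any min-height vertex of $B$ different from $A(b)$ passes through $A(b)$. So $A(b)$ is a cut vertex of $B$, contradicting that $B$ is a block. Thus every vertex of $B$ has height $h_{\min}$.

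Now suppose $B$ has a unique min-height disc $p$. Given $b \in B \setminus \{p\}$ we have $h(b) > h_{\min}$; propagation along any path in $B$ from $b$ to $p$ forces $A(b) = p$, in particular $p \supset b$. To rule out $h(b) \ge h_{\min}+2$, I would apply propagation one level higher: the ancestor $c$ of $b$ at height $h_{\min}+1$ must appear as the first height-$(h_{\min}+1)$ vertex on any path from $b$ to $p$, so $c \in B$ and removing $c$ separates $b$ from $p$, again producing a cut vertex and a contradiction. The trivial cases $|V(B)| \le 2$ follow directly from Lemma~1. The only really delicate point is the sibling claim for same-height edges, which I would verify via Remark~1 and a small local picture near the crossing; everything else is bookkeeping in the containment forest.
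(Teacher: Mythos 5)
Your argument is correct and rests on the same key idea as the paper's proof: a Seifert disc sitting at an intermediate containment level would have to be a cut vertex of the block, contradicting its $2$-connectedness, with Lemma~\ref{lema} and Remark~\ref{Nota} controlling how heights can change along an edge. The paper states this very tersely (two sentences about cut vertices); your containment forest and propagation principle are a more explicit bookkeeping of exactly the same mechanism, and all the steps, including the sibling claim for same-height edges, check out.
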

\begin{proof}
Suppose that $a$ and $b$ are two Seifert discs with different height connected by a band, both associated to the same block. By Lemma \ref{lema} we may assume that one contains the other, let say $a \supset b$. It turns out that there is no other Seifert disc associated to the block with height lower than $b$, since that would make the vertex corresponding to $a$ a cut vertex, according to Remark~\ref{Nota}. Analogously, any other disc above $b$ would make (the vertex corresponding to) $b$ a cut vertex.
\end{proof}

Hence we have two possible arrangements for (the Seifert discs that correspond to) a block: type I ({\it fried eggs} type) and type II ({\it fried eggs with a pan} type). In a type II block, the pan is the Seifert disc with lowest height. The two types of blocks are shown in Figure \ref{typesF}.
\begin{figure}[ht!]
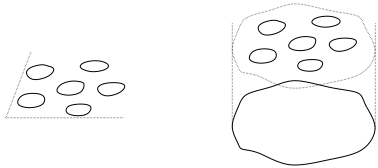

  \begin{center}
\friedeggs \qquad \qquad \friedeggswithpan
  \end{center}
\caption{Fried eggs and Fried eggs with a pan: the two types of blocks}\label{typesF}
\end{figure}

Following Cromwell \cite{PeterPaper} or Murasugi \cite{MurasugiI&II} we say that a (Seifert) circle is of type I if it does not contain any other circle; otherwise it is of type II. When a type II circle has other circles outside, it is called a decomposing circle. By definition, a special diagram does not contain any decomposing circle. Note that a type II circle is the boundary of the {\it pan} of a type II block, assuming that the diagram is connected.

\vspace{0.1cm}

Now, recall from the introduction that the part of the projection surface that corresponds to a block $B_i$ is denoted by $F_i$, which is a submanifold of $F$. Recall also that, since the cycles of a graph are the cycles of its blocks, we have that a basis of $H_1(F)$ can be obtained by juxtaposing a basis for each block.

\begin{remark}
Two different $F_i$'s can have at most one common Seifert disc, hence $F$ is the Murasugi sum of the portions $F_i$'s. The proof by Hirasawa mentioned in the introduction follows from this fact.
\end{remark}

In order to prove the main theorem, we need the following result of graph theory:

\begin{lemma} \label{OneCutVertexBlock}
Let $G$ be a connected finite graph with at least one cut vertex. Then there is a block of $G$ which has exactly one cut vertex of $G$.
\end{lemma}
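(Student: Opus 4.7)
The plan is to invoke the standard \emph{block-cut tree} construction. Form a bipartite graph $T$ whose vertex set is $\mathcal{B} \cup \mathcal{V}$, where $\mathcal{B}$ is the set of blocks of $G$ and $\mathcal{V}$ is the set of cut vertices of $G$, and declare $B \in \mathcal{B}$ adjacent to $v \in \mathcal{V}$ exactly when $v \in B$. Our goal is to prove two things: (i) $T$ is a tree; (ii) every leaf of $T$ is a block. Once both are established, the conclusion is immediate: since $G$ has at least one cut vertex, $T$ has at least one vertex from $\mathcal{V}$, and since each cut vertex belongs to at least two blocks (this being essentially the definition of a cut vertex together with the fact that blocks cover $G$), $T$ has at least two vertices; being a tree, $T$ has at least two leaves, and by (ii) these are blocks. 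A block that is a leaf of $T$ is adjacent to exactly one cut vertex of $G$, which is precisely the statement of the lemma.

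To carry out (i), I would first check that $T$ is connected, using connectedness of $G$: any two vertices of $G$ can be joined by a path, and that path can be traced through a sequence of blocks sharing cut vertices, yielding a walk in $T$ between the corresponding block-nodes. To show $T$ has no cycles, I would argue by contradiction: a cycle in $T$ would produce two distinct blocks sharing two cut vertices in a way that lets us build a larger 2-connected subgraph containing both, contradicting the maximality of blocks. (Alternatively, I would cite Diestel, which the paper already references, for the fact that the block-cut graph is a tree.)

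For (ii), I need that no cut vertex $v$ of $G$ is a leaf of $T$, i.e., that $v$ lies in at least two blocks. This is the easiest part: if $v$ is a cut vertex, then $G - v$ is disconnected, so there exist edges $e_1, e_2$ incident to $v$ lying in different components of $G - v$; by the standard equivalence relation on edges defining blocks ($e \sim e'$ iff they lie on a common cycle, or $e = e'$), $e_1$ and $e_2$ cannot be in the same block, so $v$ is in at least two blocks and hence has degree $\geq 2$ in $T$.

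The main obstacle is really just step (i), specifically the acyclicity of $T$; everything else is routine. I would likely streamline the write-up by citing the existence of the block-cut tree from a graph theory reference (Diestel is already in the bibliography) and focus the argument on extracting a leaf block. If a self-contained proof is preferred, an alternative extremal argument works directly: consider a block $B$ maximizing some quantity such as the graph-distance from a fixed vertex, or follow a longest path in $T$ and take an endpoint; the same counting shows that endpoint must be a block with a unique cut vertex.
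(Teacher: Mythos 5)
Your proof is correct, but it takes a different route from the one the paper actually writes out. You build the block--cut tree $T$ on $\mathcal{B}\cup\mathcal{V}$, prove it is a tree whose leaves are all blocks, and extract a leaf; the paper instead gives a direct ``peeling'' argument: starting from $C_0=G$, it repeatedly deletes a cut vertex $v_i$, passes to a connected component of $C_i-\{v_i\}$ not containing the previously used cut vertices, adds $v_i$ back, and observes that the blocks and cut vertices of the resulting subgraph $C_{i+1}$ are inherited from $G$; finiteness of $G$ forces the process to terminate in a block with a single cut vertex. Interestingly, the paper itself remarks that the lemma ``can be deduced from Proposition 3.1.2 of Diestel,'' which is exactly the block--cut tree fact you invoke, so your argument is essentially the alternative the author alludes to but does not carry out. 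The trade-off is as you would expect: your route needs the (standard but not free) verification that $T$ is connected and acyclic, and in exchange yields the stronger conclusion that there are at least \emph{two} leaf blocks, each meeting exactly one cut vertex; the paper's iterative argument is more elementary and self-contained, using only connectivity and finiteness, at the cost of producing just one such block. Your supporting steps are sound: the observation that a cut vertex $v$ has neighbours in distinct components of $G-v$, so that the two corresponding edges cannot lie on a common cycle and hence lie in distinct blocks, correctly shows every cut vertex has degree at least two in $T$, and a tree with at least two vertices has at least two leaves. Either write-up would be acceptable; if you cite Diestel for the tree structure, as the paper's bibliography already permits, the remaining argument is a two-line counting.
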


\begin{proof}
It can be deduced from Proposition 3.1.2 of \cite{Diestel}. It follows a direct argument: delete any cut vertex $v_0$ of $C_0=G$ and consider $C_1=C_1'\cup \{ v_0\}$ where $C_1'$ is any connected component of $C_0-\{ v_0\}$. We remark that, under these assumptions, the cut vertices of $C_1$ are exactly the cut vertices of $G$ that lie in $C_1$, except for $v_0$, and that any block of $C_1$ is a block of $G$. If $C_1$ has no cut vertices, then it is the wanted block. Otherwise we select a cut vertex $v_1$ of $C_1$ and consider $C_2=C_2' \cup \{v_1\}$ where $C_2'$ is a connected component of $C_1-\{ v_1\}$ with $v_0 \notin C_2'$. Repeating this process, we finally get a $k \in \mathbb{N}$ such that $C_k$ has no cut vertices, hence being the wanted block. Otherwise we would obtain an infinite sequence of distinct vertices $\{ v_0, v_1, v_2, \dots \}$ in the finite graph~$G$, a contradiction. 
\end{proof}

\begin{theorem}\label{DiagonalMatrixTheorem}
Let $D$ be a connected diagram of an oriented link $L$, $G$ the corresponding Seifert graph and $G=B_1 \cup \dots \cup B_k$ the decomposition of $G$ into blocks. Then there is an order in the set of blocks of $G$ for which the Seifert matrix for the projection surface is upper block triangular. More precisely, if $M_i$ is the Seifert matrix that corresponds to any basis ${\cal B}_i$ of $H_1(B_i)$, $i=1, \dots , k$, there exists a permutation $\sigma \in S_k$ such that the Seifert matrix adopts the following form:
$$\bordermatrix{
                \ & {\cal B}_{\sigma (1)}^+ & {\cal B}_{\sigma (2)}^+ & \dots  & {\cal B}_{\sigma (k)}^+ \cr 
   {\cal B}_{\sigma (1)} & M_{\sigma (1)}   & 0                & \dots  & 0                \cr
   {\cal B}_{\sigma (2)} & *                & M_{\sigma (2)}   & \ddots & \vdots           \cr
   \ \ \vdots         & \vdots           & \ddots           & \ddots & 0                \cr
   {\cal B}_{\sigma (k)} & *                & \dots            & *      & M_{\sigma (k)}
}$$

\end{theorem}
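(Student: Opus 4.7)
The plan is to argue by induction on the number $k$ of blocks of $G$. The base case $k=1$ is immediate, since the Seifert matrix coincides with $M_1$. For $k \geq 2$, Lemma~\ref{OneCutVertexBlock} produces a block $B$ whose only cut vertex of $G$ is a single vertex $v$, so $B$ meets the union $G' := \bigcup_{B_i \neq B} B_i$ of the remaining blocks only at the Seifert disc $D_v$. Since $G'$ is connected with $k-1$ blocks, the inductive hypothesis furnishes a permutation $\sigma'$ putting the Seifert matrix of $G'$ into upper block triangular form. I would take $\sigma$ to list the index of $B$ first and follow it with $\sigma'$: the diagonal $(1,1)$ block is then $M_B$ (the $F$-normal push-off of a loop in $F_B$ agrees with its push-off computed inside $F_B$ alone), the lower-right $(k-1)\times(k-1)$ sub-matrix is the Seifert matrix of $G'$, and the inductive step reduces to showing that every $(1,j)$ block with $j>1$ vanishes, i.e.\ $\mathrm{lk}(a, b^+) = 0$ for every basis cycle $a \in \mathcal{B}_B$ and every basis cycle $b$ of $H_1(B_j)$ with $B_j \subset G'$.

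To establish this vanishing I would use Theorem~\ref{TwoTypesTheorem}: the portion $F_B$ is either of fried-eggs type, with all its discs at a common height $h$, or of fried-eggs-with-pan type, with eggs at height $h$ sitting atop a pan at height $h-1$ that contains them. In either case the only disc that $F_B$ shares with $F \setminus F_B$ is $D_v$. Realizing $a$ as a loop in $F_B$ and $b$ as a loop in $F_{B_j}$, the projections of $a$ and $b^+$ to the $xy$-plane overlap only over $D_v$, or, in the pan case, possibly also over a disc of $B_j$ nested inside the pan of $B$. In each overlap region Lemma~\ref{lema} pins down the relative heights of $a$ and $b^+$, so that one of the two curves is uniformly above the other near that region; pairing the resulting signed crossings should then yield $\mathrm{lk}(a, b^+) = 0$.

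The main obstacle I anticipate is precisely this planar crossing bookkeeping, which splits into several subcases according to the type of $B$, whether $v$ corresponds to the pan or to an egg in the type II case, and whether $B_j$ approaches $D_v$ from outside the pan or from a disc nested inside it. In each configuration I would isotope $a$ within $F_B$ so that, near every band of $B_j$ incident to $D_v$, it lies strictly above or strictly below $F_{B_j}$; the over- and under-crossings produced at each such band then come in opposite-sign pairs and cancel. I would organize the argument by first settling the purely fried-eggs case, where both surfaces live essentially in a single horizontal plane, and then reducing the fried-eggs-with-pan cases to it by pushing $a$ off the pan into the egg region of $F_B$ and invoking Remark~\ref{Nota} to rule out any unwanted self-intersections during the deformation.
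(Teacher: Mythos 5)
Your overall architecture---induction on the number of blocks, Lemma~\ref{OneCutVertexBlock} to extract a block $B$ meeting the rest of $G$ in a single cut vertex, Theorem~\ref{TwoTypesTheorem} to control the geometry of $F_B$, and the fact that if one closed curve lies entirely above another then their linking number is the algebraic intersection number of two disjoint plane curves, hence zero---is the same as the paper's. But there is a genuine gap where you fix the permutation: you place $B$ \emph{unconditionally first} and reduce to proving $\mathrm{lk}(a,b^+)=0$ for every cycle $a$ of $B$. That reduction is to a false statement. If $B$ is of type II and the shared disc $D_v$ is its \emph{pan} (with the positive push-off on $D_v$ pointing upward), then $a$ runs partly in the pan at height $h-1$ and partly in the eggs at height $h$, while $b^+$ sits just above height $h-1$: so $a$ is below $b^+$ over the pan but above it over the eggs, the signed crossings need not cancel, and $\mathrm{lk}(a,b^+)$ can be nonzero. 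The standard alternating diagram of the figure-eight knot (two double-edge blocks sharing one vertex, Seifert matrix $\left(\begin{smallmatrix} 1 & 1 \\ 0 & -1\end{smallmatrix}\right)$ up to signs) already has exactly one nonzero off-diagonal entry, so the order of its two blocks is forced, and ``always first'' chooses wrongly for one of the two admissible choices of $B$.

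The paper instead lets the geometry dictate where $B$ is inserted: with the positive orientation of $D_v$ upward, $B$ goes at the \emph{beginning} when $B$ is of type I or $D_v$ is an egg of $B$ (there one proves $\mathrm{lk}(g,g_i^+)=0$), but at the \emph{end} when $D_v$ is the pan of $B$ (there one proves $\mathrm{lk}(g_i,g^+)=0$, because $g^+$ lies entirely above every $g_i$); the two rules are interchanged if the positive normal of $D_v$ points downward --- an orientation dependence your proposal never addresses. Your cancellation heuristic needs the corresponding sharpening: it is not enough that one curve is above the other ``near each overlap region''; you need a uniform over/under relation along the \emph{whole} pair of curves, and exactly this fails in the pan case for $\mathrm{lk}(a,b^+)$ while holding for $\mathrm{lk}(b,a^+)$. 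Once you add the case distinction on where to insert $B$, the rest of your plan (including the use of Remark~\ref{Nota} to exclude interlocking bands in the nested subcases) proceeds as in the paper.
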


\begin{proof}
By Lemma~\ref{OneCutVertexBlock}, there exists a block $B$ which has exactly one cut vertex. Let $D$ be the Seifert disc associated to the unique cut vertex in $B$. Translated to the surface, this means that the geometric block $F_B$ is separated from the rest of the surface $F$, with $D$ as the unique intersection.

We may assume, by induction on the number of blocks, that the Seifert matrix for $F \backslash (F_B \backslash D)$ is upper triangular for a suitable order of the rest of blocks $B_i$'s. Suppose now that the positive orientation of the disc $D$, that looking at $F \times \{ 1\}$, is upwards. Then, the basis that corresponds to the block $B$ must be added
\begin{itemize}
  \item at the beginning if $B$ is of type I, or $D$ is an egg of the type II block $B$,
  \item at the end if $D$ is the pan of the type II block $B$.
\end{itemize}
Indeed, if the positive orientation of $D$ were downward, these statements must be interchanged.

In the following displayed figures, the shadowed discs correspond all to the block $B$; the disc $D$, partially shadowed, is part of the two considered blocks, $B$ and any other block $B_i$ previously ordered. On $D$ there is an oriented arrow looking upwards, indicating the positive orientation. Suppose now that $g, g_i \in H_1(F)$ correspond to the blocks $B$ and $B_i$ respectively. We have to analyse the three possible cases:
\begin{enumerate}
  \item Suppose that $B$ is of type I. We have to see that ${\rm lk}(g,g_i^+)=0$. This can be easily checked if $B_i$ is of type I, or $B_i$ is of type II and the disc $D$ is its pan. And it is also true if $B_i$ is of type II being $D$ an egg of $B_i$, since in this case the eggs would be on different half parts of the pan. To see this, project both blocks 
$B$ and $B_i$ onto the plane $z=h(D)-1$, hence the Seifert discs at height $h(D)$ are now nested inside the pan of the block $B_i$ ({\it all the eggs in the same pan}). By remark~\ref{Nota} there is no intersections other than those given by the half-twists of the bands, which means that the two blocks are basically in separated half parts of the pan of 
$B_i$. In particular, a band like the crossed one in Figure~\ref{BIBiIIDeggF} is not possible.
\begin{figure}[ht!]
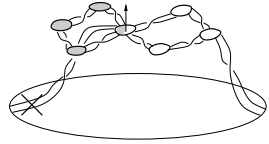

\begin{center}
\BIBiIIDegg
\end{center}
\caption{This situation is not possible}\label{BIBiIIDeggF}
\end{figure}

  \item Suppose that $B$ is of type II, and the disc $D$ is an egg of $B$. As in the previous case, we have to see that ${\rm lk}(g,g_i^+)=0$. This can be easily checked if any other block $B_i$ is of type I, or (see Figure~\ref{BIIDeggBiIIDpanF}) $B_i$ is of type~{II} being the disc $D$ the pan of $B_i$.
\begin{figure}[ht!]
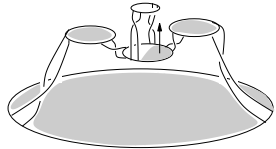

  \begin{center}
  \BIIDeggBiIIDpan
  \end{center}
\caption{${\rm lk }(g,g_i^+)=0$, $g$ defined by $B$, $g_i$ defined by $B_i$}\label{BIIDeggBiIIDpanF}
\end{figure}

Note that $D$ cannot be egg of another type II block $B_i$. Indeed, in that case and again by remark~\ref{Nota}, the pan should be the same for $B$ and $B_i$, hence the blocks $B$ and $B_i$ would share at least two vertices. But, by their maximality, different blocks of $G$ overlap in at most one vertex.

  \item Suppose that $B$ is of type II, and the disc $D$ is its pan. In this case we have to see that ${\rm lk}(g_i,g^+)=0$. This can be easily checked if the block $B_i$ is of type I, or the disc $D$ is an egg of a type II block $B_i$. And it is also true if the disc $D$ is the pan of another type II block $B_i$, since in this case, by a similar argument to that used in the first case, the eggs would be on different half parts of the pan, the crossed band in Figure~\ref{BIIDpanBiIIDpanF} being not possible. 
\begin{figure}[ht!]
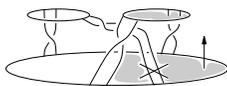

  \begin{center}
  \BIIDpanBiIIDpan
  \end{center}
\caption{${\rm lk }(g_i,g^+)=0$, both type II blocks sharing the pan}\label{BIIDpanBiIIDpanF}
\end{figure}
\end{enumerate}
\end{proof}

\begin{example}
{\rm Suppose that we wish to find the block triangular form for the Seifert matrix of the link shown on top of Figure~\ref{ExampleF}.
\begin{figure}[ht!]
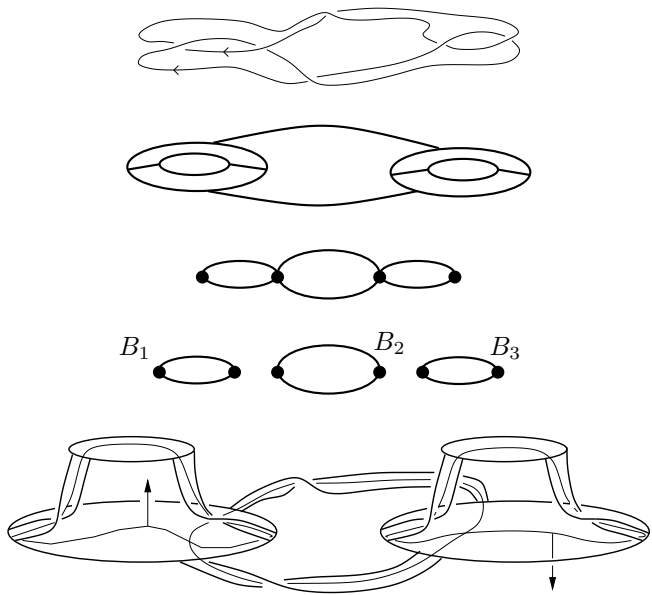

\labellist
\pinlabel {$B_1$} at -5 -520
\pinlabel {$B_2$} at 500 -510
\pinlabel {$B_3$} at 730 -520
\endlabellist

  \begin{center}
  \ExampleLink \\
  \ \\
  \ \\
  \ExampleSeifertCircles \\
  \ \\
  \ \\
  \ExampleSeifertGraph \\ 
  \ \\
  \ \\
  \ExampleBlocksSeifertGraph \\
  \ \\
  \ \\
  \ExampleProjectionSurface
  \end{center}
\caption{A $2$-component link diagram, the Seifert circles, the Seifert graph, decomposition into blocks of the Seifert graph and the projection surface}\label{ExampleF}
\end{figure}

We consider the Seifert graph, with blocks $B_1, B_2$ and $B_3$, from left to right, and the projection surface. We can consider $B=B_1$ as the block with only one cut vertex. Then, if the positive orientation of $D$ is upwards, for the other two blocks the suitable basis is given by the order of blocks $\{ B_3, B_2\}$, which gives the matrix
$$
\begin{array}{c|c|c}
& B_3^+ & B_2^+ \\
\hline
B_3 & * & 0 \\
\hline
B_2 & * & *
\end{array}
$$

Since $B=B_1$ is a block of type II and the disc $D$ that corresponds to the cut vertex is a pan of $B$, according to the proof of Theorem~\ref{DiagonalMatrixTheorem} we must add the basis for $B$ at the end, obtaining the order $\{ B_3, B_2, B_1\}$ and the matrix
$$
\begin{array}{c|c|c|c}
& B_3^+ & B_2^+ & B_1^+ \\
\hline
B_3 & * & 0 & 0 \\
\hline
B_2 & * & * & 0 \\
\hline
B_1 & * & * & *
\end{array}
$$
}
\end{example}

\section{The box matrix associated to a block}\label{boxS}
Recall from the introduction that the coefficient $c$ of the highest degree term in $\nabla _L(z)$ is equal to $(-1)^n{\rm det} (M)$ and the degree of $\nabla_L(z)$ is $n={\rm rk }(H_1(F))$, whenever ${\rm det}(M)$ does not vanish. By Theorem~\ref{DiagonalMatrixTheorem} ${\det }(M)=\prod _{i=1}^k {\det }(M_i)$ where $M_i$ is the Seifert matrix that corresponds to the surface $F_i$ associated to the block $B_i$ of $G$. Then, in order to prove the theorem stated in the introduction, it is enough to show that, if $B_i$ is a block with rank $r_i$ and all its edges have sign $\epsilon_i$, then the determinant of its Seifert matrix does not vanish and has sign $(-\epsilon_i )^{r_i}$. Indeed, since $n={\rm rk }(G)$ is the sum of the ranks $r_i$ of its blocks, we would have
\begin{eqnarray*}
c & = & (-1)^n {\det }(M)  \\
  & = & (-1)^n \prod _{i=1}^k {\det }(M_i) \\
  & = & (-1)^n \prod _{i=1}^k (-\epsilon _i)^{r_i}|\det (M_i)| \\
  & = & \prod _{i=1}^k \epsilon _i^{r_i} |{\det }(M_i)| \ \not= \ 0.
\end{eqnarray*}

Now, the part of the diagram that corresponds to a homogeneous block is alternating (in fact, it is a special alternating diagram), and the result for these links follows from the work by Murasugi \cite{Murasugi} and Crowell \cite{Crowell} fifty years ago. Murasugi's proof was accomplished by working on the Alexander matrix of the Dehn presentation, while Crowell worked with the Wirtinger presentation of the fundamental group of the link. In fact, Crowell's paper rests on a striking application of a graph theoretical result, the Bott-Mayberry matrix tree theorem, an approach which is also explained in Proposition 13.24 of \cite{Burde}. In this section we will prove it (Theorems~\ref{MatrixPropertiesTheorem} and \ref{DeterminantTheorem}) by looking at an explicit matrix of degrees defined using the planar structure of the Seifert graph.

Let $D$ be an oriented diagram, $F$ its projection surface and $G$ the corresponding Seifert graph. Let $B$ be a block of $G$. A basis $\{ g_1, \dots , g_r\}$ of $H_1(B)$ (hence of $H_1(F_B)$) can be obtained collecting the counterclockwise oriented cycles defined by the boundaries of the bounded regions $R_i$ defined by $B$. Let $R_{r+1}$ be the unbounded region defined by this planar graph $B$ (see Figure \ref{BlockgraphF}).
\begin{figure}[ht!]
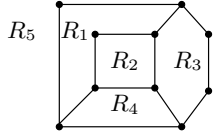

\labellist
\small
\pinlabel {$R_1$} at 30 150
\pinlabel {$R_2$} at 105 105
\pinlabel {$R_3$} at 200 105
\pinlabel {$R_4$} at 105 40
\pinlabel {$R_5$} at -50 150
\endlabellist
  \begin{center}
  \Blockgraph
  \end{center}
\caption{Regions in a block: $R_5$ is the unbounded region}\label{BlockgraphF}
\end{figure}

The Seifert graph is a bipartite graph because the projection surface is orientable, hence every circuit in the graph must have an even length. In particular, we can choose a sign for an arbitrary vertex, and extend this labelling to the other vertices in an alternating fashion, when moving along the edges. We also have, for each edge $e$ in $B$, its corresponding sign $\epsilon (e)$ (if the original diagram is homogeneous, this sign is constant in the block). We define $E_{ij}$ as the set of edges in $\partial R_i \cap \partial R_j$ with the sign arrangement shown in Figure~\ref{enfrentadosF}.
\begin{figure}[ht!]
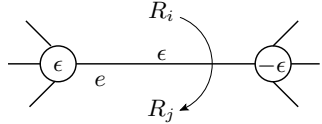

\labellist
\small
\pinlabel {$R_j$} at 195 0
\pinlabel {$e$} at 118 38
\pinlabel {$\epsilon$} at 65 56
\pinlabel {$\epsilon$} at 195 71
\pinlabel {$-\epsilon$} at 333 56
\pinlabel {$R_i$} at 195 125
\endlabellist
  \begin{center}
  \enfrentados
  \end{center}
\caption{The edge $e$ belongs to $E_{ij}$ with this arrangement of signs} \label{enfrentadosF}
\end{figure}

It turns out that
$$
{\rm lk}(g_i,g_i^+)=\frac{1}{2}\sum_{e \in \partial R_i} -\epsilon (e)
$$
and
$$
{\rm lk}(g_i,g_j^+)=\sum_{e \in E_{ij}} \epsilon (e).
$$

In particular, if the block is homogeneous, let say with sign $\epsilon$, then
$$
{\rm lk}(g_i,g_i^+)=-\epsilon k_i
$$
where $2k_i$ is the number of edges in $\partial R_i$, and
$$
{\rm lk}(g_i,g_j^+)=\epsilon |E_{ij}|.
$$
In other words, ${\rm lk}(g_i,g_j^+)$ is the number (with sign $\epsilon$) of the edges $e$ in the frontier of the regions $R_i$ and $R_j$, such that one leaves the $-\epsilon$ signed vertex on the left when going from $R_i$ to $R_j$ through the edge $e$ (see Figure~\ref{enfrentadosF}).

As an example, we display the Seifert matrix associated to the above graph, assuming that the top left vertex is labelled with sign $\epsilon$ (see Figure \ref{BlockgraphWithSignsF}):
$$
\begin{array}{rl}
\ & \hspace{0.2cm} \begin{array}{ccccc} & g_1^+ & \hspace{0.05cm} g_2^+ & \hspace{0.1cm} g_3^+ & \hspace{0.2cm} g_4^+ \end{array} \\
\vspace{-0.2cm}
\ & \hspace{0.2cm} \begin{array}{ccccc} &       &                       &                      & \end{array} \\
\begin{array}{c}
g_1 \\
g_2 \\
g_3 \\
g_4
\end{array} &
\left(
\begin{array}{cccc}
-3\epsilon & \epsilon   & \epsilon    & 0          \\
\epsilon   & -2\epsilon & 0           & \epsilon   \\
0          & \epsilon   & -3\epsilon  & 0          \\
\epsilon   & 0          & \epsilon    & -2\epsilon
\end{array}
\right)
\end{array}
$$

\begin{figure}[ht!]
\labellist
\small
\pinlabel {$R_1$} at 30 140
\pinlabel {$R_2$} at 108 108
\pinlabel {$R_3$} at 200 105
\pinlabel {$R_4$} at 105 35
\pinlabel {$R_5$} at -50 150
\pinlabel {$\epsilon$} at 4 211
\pinlabel {$-\epsilon$} at 188 211
\pinlabel {$-\epsilon$} at 58 162
\pinlabel {$\epsilon$} at 149 162
\pinlabel {$\epsilon$} at 250 148
\pinlabel {$\epsilon$} at 45 78
\pinlabel {$-\epsilon$} at 180 65
\pinlabel {$-\epsilon$} at 255 60
\pinlabel {$-\epsilon$} at 0 -10
\pinlabel {$\epsilon$} at 190 -10
\endlabellist
  \begin{center}
  \Blockgraph
  \end{center}
\caption{A homogeneous block (all the edges have sign $\epsilon$)}\label{BlockgraphWithSignsF}
\end{figure}

The sets $E_{ij}$'s satisfy two properties, which will play later a central role, specially in Theorem~\ref{MatrixPropertiesTheorem}:
\begin{enumerate}
    \item If $e \in \partial R_i \cap \partial R_j$, then $ e \in E_{ij} \Leftrightarrow e \notin E_{ji}$, and in particular $|E_{ij}|+|E_{ji}|$ is the cardinal of the edges in $\partial R_i \cap \partial R_j$.
    \item Consider two consecutive edges $e$ and $f$ in the boundary of a certain region $R_i$, that separate $R_i$ from $R_j$ and $R_k$ respectively, with possibly $j=k$. Suppose that both edges have the same sign, which is the case if we have a homogeneous graph. Then $e \in E_{ij} \Leftrightarrow f \in E_{ki}$.
\begin{figure}[ht!]
\labellist
\small
\pinlabel {$R_j$} at 112 75
\pinlabel {$R_k$} at 259 75
\pinlabel {$e$} at 112 22
\pinlabel {$f$} at 259 22
\pinlabel {$R_i$} at 184 -10
\endlabellist
  \begin{center}
  \consecutiveedges
  \end{center}
\caption{Consecutive edges in the boundary of a region $R_i$}\label{consecutiveedgesF}
\end{figure}
\end{enumerate}

\begin{remark}
Note that, for a homogeneous block with sign $\epsilon$, the sum of two transposed elements in the corresponding Seifert matrix gives
$$
{\rm lk}(g_i,g_j^+)+{\rm lk}(g_j,g_i^+)=\epsilon |E_{ij}|+\epsilon |E_{ji}|=\epsilon \ |\partial R_i \cap \partial R_j|.
$$
\end{remark}

\subsection{The directed dual graph}\label{dualS}
A description of the Seifert matrix corresponding to a homogeneous block can be better understood as a certain matrix of degrees for the oriented dual graph. To construct the directed dual graph we draw a vertex $v_i$ in the region $R_i$, including a vertex $v_{r+1}$ for the unbounded region $R_{r+1}$, and for each edge $e$ in $\partial R_i \cap \partial R_j$ we draw an edge $\bar{e}$ joining $v_i$ and $v_j$, the edge $\bar{e}$ intersecting the original graph only in $e$. Moreover, the edge $\bar{e}$ is oriented from $v_i$ to $v_j$ if (and only if) $e \in E_{ij}$. An example is exhibited in Figure \ref{dualchicoF}, assuming the sign $\epsilon =+1$ for all the edges and for the top left vertex.
\begin{figure}[ht!]
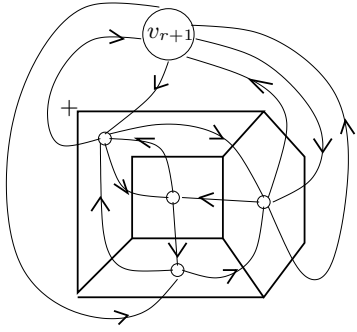

\labellist
\small
\pinlabel {$+$} at 63 217
\pinlabel {$v_{r+1}$} at 164 289
\endlabellist
  \begin{center}
  \dualchico
  \end{center}
\caption{The directed dual graph, all the edges assumed to have sign $+1$}\label{dualchicoF}
\end{figure}

Note that the edges incident at any vertex have alternative orientations, which is equivalent to the second property of the sets $E_{ij}$'s. In particular the degrees of the vertices are even numbers.

We now define $m_{ii}=-\epsilon \ {\rm deg}_i$ and $m_{ij}=\epsilon \ {\rm deg}_{ij}$ where ${\rm deg }_i$ is the number of edges leaving (or going to) $v_i$ and ${\rm deg}_{ij}$ is the number of edges from $v_i$ to $v_j$. It turns out that the matrix $(m_{ij})_{1\leq i,j \leq r+1}$ has determinant zero, and we obtain the Seifert matrix of the block by just deleting its last row and column. In the above example, for $\epsilon =+1$, we would have
$$
\left(
\begin{array}{ccccc}
-3 &  1 &  1 &  0 &  1 \\
 1 & -2 &  0 &  1 &  0 \\
 0 &  1 & -3 &  0 &  2 \\
 1 &  0 &  1 & -2 &  0 \\
 1 &  0 &  1 &  1 & -3
 \end{array}
\right).
$$

One should note that this is essentially what Proposition 13.21 in \cite{Burde} states, where the adjective special is applied to a diagram if the union of the black regions (assumed a chessboard colouring in which the unbounded region is white) is the image of a Seifert surface under the projection that defines the diagram.

\subsection{Properties of the matrix for a homogeneous block}\label{propertiesS}

Let $\epsilon$ be a sign, $+1$ or $-1$. A square matrix $A$ is said to be $\epsilon$-signed if its diagonal elements have sign $-\epsilon$ (in particular they do not vanish) and the elements out of the diagonal are zero or have sign 
$\epsilon$. The matrix $A$ is said to be row-dominant if for any row $i$ we have $|a_{ii}|\geq \sum_{j\not= i}|a_{ij}|$. The matrix $A$ is said to be strictly ascending row-dominant (abbreviated, {\it sard}) if $A$ is row-dominant and, in addition, there is an order of its rows $i_1 < \dots < i_r$ such that $|a_{i_ri_r}|>0$ and for any $k \in \{ 1, \dots , r-1 \}$ we have that $|a_{i_ki_k}| > \sum_{j\not= i_1, i_2, \dots , i_k}|a_{i_kj}|$.

The following matrix can be seen to be $(+)$-signed and sard choosing the order $3,1,2$ for its rows (note that the condition $|a_{i_ri_r}|>0$ is for sure if $A$ is $\epsilon$-signed):
$$
\left(
\begin{array}{ccc}
-3 & 0 & 3 \\
0 & -2 & 1 \\
1 & 0 & -2
\end{array}
\right)
$$
\begin{theorem}\label{MatrixPropertiesTheorem}
Let $B$ be a homogeneous block with sign $\epsilon$. Then there exists a basis of $H_1(B)$ such that the associated Seifert matrix $M$ is $\epsilon$-signed and sard.
\end{theorem}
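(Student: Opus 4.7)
The basis I would use is the standard geometric one: the counterclockwise cycles $g_1, \dots, g_r$ bounding the $r$ bounded regions $R_1, \dots, R_r$ of $B$. With the formulas already derived, the Seifert matrix has diagonal entries $a_{ii} = -\epsilon k_i$ and off-diagonal entries $a_{ij} = \epsilon |E_{ij}|$, so the $\epsilon$-signed property is immediate: the diagonal entries have sign $-\epsilon$ and are nonzero, and the off-diagonal ones are either zero or have sign $\epsilon$.

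The decisive geometric input is the directed dual graph $\bar G$. Since $B$ is homogeneous, property 2 of the sets $E_{ij}$ applies to every pair of consecutive boundary edges around any region, so at each vertex of $\bar G$ the incident edges alternate between outgoing and incoming. Hence outdegree equals indegree at every vertex, and in particular $\sum_{j=1}^{r+1} |E_{ij}| = k_i$ for $i \leq r$. Therefore
$$\sum_{j \neq i,\, j \leq r} |a_{ij}| \;=\; k_i - |E_{i,r+1}| \;\leq\; k_i \;=\; |a_{ii}|,$$
which is row-dominance, the inequality being strict exactly when $v_i$ has an outgoing dual edge to the unbounded vertex $v_{r+1}$.

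For the sard ordering I would first claim that every $v_i$ with $i \leq r$ admits a directed path to $v_{r+1}$ in $\bar G$. If not, let $S$ be the set of vertices reachable from an offending $v_i$; then no directed edge leaves $S$, and summing the identity outdegree $=$ indegree over $S$ forces the number of edges entering $S$ to equal that of edges leaving $S$, hence also zero. Thus $S$ would be disconnected from its complement in the underlying undirected graph, contradicting the connectedness of the dual of the connected planar graph $B$. With the claim in hand, I build the order greedily: set $S_0 = \{v_{r+1}\}$ and, at step $k+1$, pick any $v_{i_{k+1}} \in V \setminus S_k$ with an outgoing edge into $S_k$ (the last vertex outside $S_k$ on any directed path from $V \setminus S_k$ to $v_{r+1}$ furnishes one), then update $S_{k+1} = S_k \cup \{v_{i_{k+1}}\}$. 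By construction $|E_{i_{k+1}, r+1}| + \sum_{s \leq k} |E_{i_{k+1}, i_s}| \geq 1$, and combining this with the identity $\sum_{j \neq i_{k+1},\, j \leq r} |a_{i_{k+1} j}| = k_{i_{k+1}} - |E_{i_{k+1}, r+1}|$ yields the strict inequality $|a_{i_{k+1} i_{k+1}}| > \sum_{j \notin \{i_1, \dots, i_{k+1}\}} |a_{i_{k+1} j}|$ required for sard-ness.

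The main obstacle is the reachability claim: its proof rests entirely on the balance outdegree $=$ indegree at every dual vertex, which in turn depends on applying property 2 of the $E_{ij}$'s to every pair of consecutive boundary edges, not only to same-signed pairs. This is precisely where the homogeneity hypothesis on $B$ enters, and it is what prevents a direct extension of the argument to non-homogeneous blocks.
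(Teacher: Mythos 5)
Your proof is correct, and its first half (the choice of the basis of counterclockwise region boundaries, the $\epsilon$-signed property, and row-dominance via the alternation of the sets $E_{ij}$) coincides with the paper's. Where you genuinely diverge is in producing the sard order. The paper picks a bounded region $R_{i}$ with $E_{i,r+1}\neq\emptyset$ (which exists because the dual edges incident to $v_{r+1}$ alternate in orientation), deletes the edges in $\partial R_{i}\cap\partial R_{r+1}$ so that $R_{i}$ merges into the unbounded region, and iterates, asserting that the region can always be chosen so that the remaining graph is still a homogeneous block. You instead stay entirely inside the directed dual graph: the balance $\mathrm{outdeg}=\mathrm{indeg}$ at every dual vertex gives, by the standard cut-counting argument, that every $v_i$ admits a directed path to $v_{r+1}$, and a greedy peeling of vertices having an outgoing edge into the already-processed set then yields exactly the strict inequalities demanded by the sard condition (each newly chosen region has at least one outgoing dual edge into $\{v_{r+1},v_{i_1},\dots,v_{i_k}\}$, so the excluded columns absorb at least one unit of $k_{i_{k+1}}$). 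The two constructions serve the same purpose, but your reachability lemma is self-contained and sidesteps the paper's unproved claim that $2$-connectedness of the shrinking graph can be preserved at each deletion step; in that sense yours is the more robust justification of the sard property, at the cost of making the directed dual graph an essential ingredient of the proof rather than an illustrative remark. Your closing observation correctly locates where homogeneity is used: the alternation (property 2 of the $E_{ij}$'s) requires consecutive boundary edges to carry the same sign, which is exactly what a homogeneous block guarantees.
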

\begin{proof}
Consider the basis of $H_1(B)$ given by the counterclockwise oriented cycles $\{ g_1, \dots , g_r\}$, boundaries of the bounded regions $R_i$ of $B$. Then the Seifert matrix $M=(a_{ij})_{1\leq i,j \leq r}$ is obviously $\epsilon$-signed since $a_{ii}={\rm lk}(g_i,g_i^+)=-\epsilon k_i$ where $k_i$ is half the number of edges in the boundary of $R_i$, and $a_{ij}={\rm lk}(g_i,g_j^+)=\epsilon |E_{ij}|$ if $i \not= j$. To see that $A$ is row-dominant note that $|a_{ii}|=k_i$, and on the other hand $\sum_{j\not= i}|a_{ij}|=\sum_{j\not= i}|E_{ij}|\leq k_i$, the inequality by the second property of the sets $E_{ij}$'s.

We finally check that the matrix $M$ is sard, by finding an order $i_1 < \dots < i_r$ for its rows such that $|a_{i_ki_k}| > \sum_{j\not= i_1, i_2, \dots , i_k}|a_{i_kj}|$ for any $k \in \{ 1, \dots , r-1 \}$. By the second property of the sets $E_{ij}$'s there is always a bounded region $R_i$ such that $E_{i,r+1}\not= \emptyset$. The corresponding row is chosen to be the first one in this order, that is, $i_1=i$. Note that, since $|a_{ii}| \geq \sum_{1\leq j \leq r+1, j\not= i}|E_{ij}|$ and $E_{i,r+1}\not= \emptyset$, it follows that $|a_{ii}| > \sum_{j\not= i}|a_{ij}|$. Now, when we delete the $i$-th row and column, the remaining matrix corresponds to the graph that remains after deleting the region $R_i$ (precisely, deleting the intersection between $R_i$ and $R_{r+1}$). The region can be also taken in such a way that the remaining graph is still a homogeneous block, hence the repetition of this process provides the wanted order for the rows of $M$.
\end{proof}

\subsection{The determinant for a homogeneous block}\label{determinantS}
In this section we will prove that, given a block with sign $\epsilon$ and rank $r$, the determinant of the corresponding submatrix is non-zero, and its sign is equal to $(-\epsilon)^r$. To see this we just need a final result, purely algebraic, due to Murasugi (see \cite{Murasugi}, section 2). For the convenience of the reader, we reproduce here its proof in a slightly different way:

\begin{theorem}\label{DeterminantTheorem} {\rm (Murasugi)}
Let $A$ be a square matrix of order $r$, $\epsilon$-signed and sard. Then ${\rm det} (A) <0$ if $\epsilon =+1$ and $r$ is odd, and ${\rm det} (A)>0$ otherwise. In other words, ${\rm det}(A)$ does not vanish and has sign $(-\epsilon )^r$.
\end{theorem}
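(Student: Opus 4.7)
The plan is to proceed by induction on the order $r$ of $A$. The base case $r=1$ is immediate, since $A=(a_{11})$ with $a_{11}$ of sign $-\epsilon$ has nonzero determinant of sign $(-\epsilon)^1$. For the inductive step, let $i_1<\dots<i_r$ be the sard ordering. Conjugating $A$ by a permutation matrix (i.e., simultaneous row and column permutation) preserves the determinant, the $\epsilon$-signed property, and the sard property, so I may assume without loss of generality that the sard ordering is $1<2<\dots<r$. In particular row $1$ satisfies the strict inequality $|a_{11}|>\sum_{j\geq 2}|a_{1j}|$, so $a_{11}\neq 0$ and one-step Gaussian elimination along row $1$ is well defined.

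Performing this elimination (subtract $(a_{i1}/a_{11})\cdot\text{row }1$ from row $i$ for each $i\geq 2$) keeps $\det(A)$ fixed and zeroes column~$1$ below $(1,1)$; expanding along column~$1$ then gives $\det(A)=a_{11}\cdot\det(A')$, where $A'=(a'_{ij})_{i,j=2}^{r}$ is the bottom-right submatrix after elimination. The core of the argument is to verify that $A'$ is again $\epsilon$-signed and sard with ordering $2<\dots<r$. For the sign pattern, direct inspection of $a'_{ij}=a_{ij}-(a_{i1}/a_{11})a_{1j}$ shows off-diagonal entries still have sign $\epsilon$ or vanish, while on the diagonal
$$|a'_{ii}|\;=\;|a_{ii}|-\frac{|a_{i1}||a_{1i}|}{|a_{11}|}\;>\;|a_{ii}|-|a_{i1}|\;\geq\;0,$$
using $|a_{1i}|/|a_{11}|<1$ from strict sard-dominance of row $1$ together with $|a_{ii}|\geq|a_{i1}|$ from row-dominance of row $i$; hence $a'_{ii}$ retains sign $-\epsilon$.

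For the sard property the key identity is, for $2\leq k\leq r-1$,
$$|a'_{kk}|-\sum_{j=k+1}^{r}|a'_{kj}|\;=\;\Big(|a_{kk}|-\sum_{j=k+1}^{r}|a_{kj}|\Big)-\frac{|a_{k1}|}{|a_{11}|}\sum_{j=k}^{r}|a_{1j}|.$$
The first bracketed term is $\geq|a_{k1}|$ by row-dominance of row $k$ (using that $k\geq 2$, so column~$1$ lies in the complementary sum), while the subtracted term is strictly less than $|a_{k1}|$ by strict sard-dominance of row $1$; this yields the required strict sard inequality, and the tail case $k=r$, requiring only $|a'_{rr}|>0$, follows by the same estimates. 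By the induction hypothesis, $\det(A')$ is nonzero with sign $(-\epsilon)^{r-1}$, hence $\det(A)=a_{11}\det(A')$ is nonzero with sign $(-\epsilon)\cdot(-\epsilon)^{r-1}=(-\epsilon)^r$, as required.

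The main obstacle is the careful accounting of strict-versus-non-strict inequalities in the reduction step: the entries of $A'$ pick up correction terms $(a_{i1}/a_{11})a_{1j}$ which could a priori destroy either the sign pattern or the dominance margin. The strict inequality built into the sard definition for row~$1$ is precisely what is needed — it supplies a uniform contraction factor $<1$ in the corrections, so that the sign pattern, the row-dominance, and the chain of strict sard inequalities for the remaining rows all propagate cleanly from $A$ to $A'$.
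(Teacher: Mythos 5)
Your proof is correct, but it takes a genuinely different route from the paper's. The paper also inducts on $r$ and also isolates the row $i_1$ with the strict dominance property, but instead of eliminating, it writes $a_{i_1i_1}=-\sum_{j\neq i_1}a_{i_1j}-\lambda$ and uses multilinearity of the determinant in that row to get $\det(A)=x-\lambda y$: the term $y$ is the minor obtained by deleting row and column $i_1$, which is again $\epsilon$-signed and sard and is handled by the induction hypothesis, while the term $x$ is the determinant of an $\epsilon$-signed, merely row-dominant matrix whose $i_1$-th row sums to zero; a separate auxiliary lemma (Lemma~\ref{auxiliar}, itself proved by another induction) shows such determinants are zero or have sign $(-\epsilon)^r$, and the two contributions combine as $(-\epsilon)^r(|x|+|\lambda||y|)>0$. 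Your Gaussian-elimination argument replaces that auxiliary lemma by the verification that the Schur-type reduced matrix $A'$ inherits the $\epsilon$-signed and sard structure, so a single induction closes on itself; what it costs is leaving the integers and the careful bookkeeping of how the correction terms $(a_{i1}/a_{11})a_{1j}$ interact with the dominance margins, which is exactly where the strictness of the sard condition for the pivot row is spent. The paper's version stays with integer matrices and avoids that bookkeeping at the price of an extra lemma. One small point in your write-up: when $a_{k1}=0$ your stated chain of inequalities degenerates (the subtracted term equals $|a_{k1}|=0$ rather than being strictly smaller), and the required strictness must then be drawn from the original sard inequality for row $k$ itself, which does hold; this is a one-line patch rather than a genuine gap.
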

\begin{proof}
By induction on $r$. The case $r=1$ (odd) is trivial; for $A=(a)$ we have ${\rm det}(A)=a$, and the result follows from the fact that $A$ is $\epsilon$-signed.

Assume now the statement for cases $1$ to $r-1$, and consider the case $r$. Since $A$ is sard, there is an order $i_1 < \dots < i_r$ of the rows such that for any $k \in \{ 1, \dots , r-1 \}$ we have $|a_{i_ki_k}|>\sum_{j \not = i_1, i_2, \dots , i_k}|a_{i_kj}|$. In particular, we have that
$$
a_{i_1i_1}=-\sum_{j\not= i_1}a_{i_1j}-\lambda
$$
with $\lambda\not= 0$ and sign $\epsilon$. We now develop the determinant by the $i_1$-row, obtaining
$$
{\rm det}(A)
=
{\rm det}\left( \begin{array}{c} \dots \\ a_{i_11} \ \dots \ a_{i_1i_1} \ \dots \ a_{i_1r} \\ \dots \end{array}\right)
=x-\lambda y
$$
where
$$
x={\rm det}\left( \begin{array}{c} \dots \\ a_{i_11} \ \dots \ -\sum_{j\not= i_1}a_{i_1j} \ \dots \ a_{i_1r} \\ \dots \end{array}\right)
$$
and $y$ is the determinant of the square matrix of order $r-1$, obtained by deleting the $i_1$-th row and column. Since this matrix is also $\epsilon$-signed and sard, by induction $y=(-\epsilon)^{r-1}|y|\not= 0$. Moreover, if each $a_{i_1j}=0$ for $j\not= i_1$ then $x=0$ obviously; otherwise it is a square matrix of order $r$, $\epsilon$-signed and row-dominant, and by Lemma~\ref{auxiliar} we have that $x=0$ or has sign $(-\epsilon )^r$. Then
$$
{\rm det}(A)=x-\lambda y=(-\epsilon )^r|x|-\epsilon |\lambda |(-\epsilon )^{r-1}|y|=(-\epsilon )^r(|x|+|\lambda | |y|)
$$
and the result follows since $|\lambda | >0, |y|>0$ and $|x|\geq 0$.
\end{proof}

\begin{lemma}\label{auxiliar}
Let $A$ be a square matrix of order $r$, $\epsilon$-signed and row-dominant. Then ${\rm det} (A)\leq 0$ if $\epsilon =+1$ and $r$ is odd, and ${\rm det} (A)\geq 0$ otherwise.
\end{lemma}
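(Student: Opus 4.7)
The plan is to prove Lemma~\ref{auxiliar} by a double induction: an outer induction on the order $r$, mirroring the scheme of Theorem~\ref{DeterminantTheorem}, together with an inner induction on the number $\phi(A)$ of rows of $A$ at which strict row-dominance $|a_{ii}|>\sum_{j\ne i}|a_{ij}|$ holds. The outer base $r=1$ is immediate. For the inner base $\phi(A)=0$, every row of $A$ is at equality, which forces every row sum to vanish; hence the all-ones column vector lies in the kernel of $A$ and $\det(A)=0$, giving $(-\epsilon)^r\det(A)=0\geq 0$ as required.

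For the inner inductive step ($\phi(A)\geq 1$), I would pick any row $i_0$ at which strict dominance holds. If all off-diagonal entries of row $i_0$ vanish, Laplace expansion along that row gives $\det(A)=a_{i_0 i_0}\cdot y$, where $y$ is the determinant of the $(i_0,i_0)$-minor. The minor is $\epsilon$-signed and row-dominant of order $r-1$, so by the outer induction $(-\epsilon)^{r-1}y\geq 0$, and since $a_{i_0 i_0}$ has sign $-\epsilon$ one obtains $(-\epsilon)^r\det(A)=[(-\epsilon)a_{i_0 i_0}]\cdot[(-\epsilon)^{r-1}y]\geq 0$.

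Otherwise, mimicking the expansion used in the proof of Theorem~\ref{DeterminantTheorem}, write $a_{i_0 i_0}=-\sum_{j\ne i_0}a_{i_0 j}-\lambda$ with $\lambda$ of sign $\epsilon$ (nonzero because row $i_0$ is strict). By linearity of the determinant in row $i_0$, decompose $\det(A)=x-\lambda y$, where $y$ is the $(i_0,i_0)$-minor and $x$ is the determinant of the matrix $A'$ obtained from $A$ by replacing $a_{i_0 i_0}$ with $-\sum_{j\ne i_0}a_{i_0 j}$. The crucial observation is that $A'$ is still $\epsilon$-signed (the new $(i_0,i_0)$-entry is a nonzero sum of quantities of sign $\epsilon$, since at least one off-diagonal entry of row $i_0$ is nonzero) and row-dominant, but row $i_0$ of $A'$ is now at equality, so $\phi(A')=\phi(A)-1$. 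The inner induction therefore gives $(-\epsilon)^r x\geq 0$, and a routine sign check (using $\lambda$ of sign $\epsilon$ and $(-\epsilon)^{r-1}y\geq 0$ from the outer induction) shows $(-\epsilon)^r\lambda y\leq 0$; adding the two contributions yields $(-\epsilon)^r\det(A)\geq 0$.

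The main obstacle, and the reason the argument is not a direct copy of the proof of Theorem~\ref{DeterminantTheorem}, is that the matrix $A'$ appearing in the ``$x$'' term has the same order $r$ as $A$ and hence cannot be reached by the outer induction on $r$ alone. The inner induction on $\phi(A)$ is designed precisely to circumvent this: each invocation of the ``$x$'' subroutine strictly decreases $\phi$, and the recursion bottoms out either when $\phi$ drops to zero (the degenerate singular case handled by the inner base) or when the selected strict row has only its diagonal entry nonzero (reducing directly to a smaller order). Once this bookkeeping is in place, all the sign manipulations proceed exactly as in the proof of Theorem~\ref{DeterminantTheorem}, the only difference being that weak inequalities now replace the strict ones.
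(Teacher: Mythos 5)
Your proof is correct. It rests on the same key computation as the paper's: writing a diagonal entry as $a_{ii}=-\sum_{j\neq i}a_{ij}-\lambda_i$ with $\lambda_i$ zero or of sign $\epsilon$, expanding the determinant by multilinearity in that row, and observing that the matrix whose row sums all vanish is singular. Where you differ is in how the induction is organized. The paper relaxes all $r$ diagonal entries simultaneously, obtaining the telescoping identity ${\rm det}(A)={\rm det}(A_r)-\sum_{i=1}^r\lambda_i\,{\rm det}((A_{i-1})_i^i)$ with ${\rm det}(A_r)=0$, and pays for this by having to enlarge the class of matrices to ``weak $\epsilon$-signed'' ones (zeros allowed on the diagonal), since the intermediate matrices $A_{i-1}$ and their minors may acquire zero diagonal entries; a single induction on $r$ then suffices. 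You instead relax one strictly dominant row at a time, which forces a secondary induction on the number $\phi(A)$ of strict rows (your modified matrix $A'$ has the same order $r$ as $A$), but lets you stay inside the original class of $\epsilon$-signed matrices: the new $(i_0,i_0)$ entry is nonzero of sign $-\epsilon$ precisely because you split off as a separate case the situation where row $i_0$ has no nonzero off-diagonal entry. Both devices address the same obstruction --- that replacing a diagonal entry does not lower the order of the matrix --- the paper's being more compact, yours avoiding the strengthened hypothesis at the cost of a lexicographic induction on $(r,\phi)$. All the sign checks in your argument are sound, including the inner base case $\phi(A)=0$, where equality in every row does force all row sums to vanish and hence ${\rm det}(A)=0$.
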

\begin{proof}
For technical reasons in the induction argument, we will prove this result for a slightly wider category of matrices, the weak $\epsilon$-signed and row-dominant matrices. For this matrices the condition of being $\epsilon$-signed is relaxed for allowing zeros in the diagonal.
  
We proceed by induction on $r$. The case $r=1$ is trivial. Assume now the statement for cases $1$ to $r-1$, and consider the case $r$. Since $A$ is weak $\epsilon$-signed and row-dominant, each diagonal element of $A$ can be written as $a_{ii}=-\sum _{j\not= i} a_{ij} -\lambda _i$ with $\lambda _i =0$ or with sign $\epsilon$.

Let $A_i$ be the same matrix as $A$ except for possibly the first $i$ elements of its diagonal, where we consider $a_{ii}+\lambda _i$ instead of just $a_{ii}$. Let $A_0=A$. It turns out that
$$
{\rm det}(A)={\rm det}(A_r)-\sum_{i=1}^r \lambda_i\ {\rm det}((A_{i-1})_i^i).
$$
where the notation $B_i^i$ is used to denote the matrix obtained from $B$ by deleting its $i$-th row and $i$-th column. This follows from the equalities ${\rm det}(A_{k-1})={\rm det}(A_{k})-\lambda_k{\rm det}((A_{k-1})_k^k)$, $k=1, \dots , r$.

Note that the determinant of $A_r$ is equal to zero, since the sum of all the elements of each row is zero. Moreover, each matrix $(A_{i-1})_i^i$ is also weak $\epsilon$-signed and row-dominant, and has order $r-1$. By induction, its determinant is zero or has sign $(-\epsilon )^{r-1}$. Since each $\lambda _i$ is zero or has sign $-\epsilon$, the result follows.

\end{proof}

As an application of the argument developed in this section, we prove the following result:

\vspace{0.3cm}

\noindent {\bf Claim.} {\it Let $L$ be an oriented link which has a special alternating diagram. Then the leading coefficient of $\nabla _L(z)$ is $\pm 1$ if and only if $L$ is the connected sum of $(2,q)$-torus links.} 

\begin{proof} Assume that $L$ is the connected sum of $(2,q)$-torus links. Since $\nabla_{L\sharp L'}(z)= \nabla_{L}(z)\nabla_{L'}(z)$, it is enough to show that the leading coefficient of $\nabla_{L}(z)$ is $\pm 1$ if $L$ is a $(2,q)$-torus link. The diagram shown in Figure~\ref{ClaimExtraF} (left), or its mirror image, is then a diagram of $L$. It has $q$ crossings, all with the same sign~$\epsilon$. The corresponding Seifert graph, shown on the right in Figure~\ref{ClaimExtraF}, is a homogeneous block $B$ with two vertices and $q$ edges, all of them with sign $\epsilon$.
\begin{figure}[ht!]
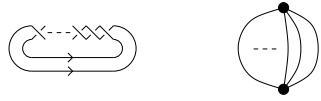

\labellist
\small
\endlabellist
  \begin{center}
  \ClaimExtra
  \end{center}
\caption{A $(2,q)$-torus link diagram and the corresponding Seifert graph}\label{ClaimExtraF}
\end{figure}

Following the process explained at the beginning of this section, we obtain the Seifert matrix $M=(m_{i,j})_{i,j=1, \dots , q-1}$ where $m_{i,i}=-\epsilon$ for $i=1, \dots , q-1$, $m_{i,i+1}=\epsilon$ for $i=1, \dots , q-2$, and $m_{i,j}=0$ otherwise. Then the leading coefficient of $\nabla_L(z)$ is $\epsilon ^{{\rm rk}(B)}|{\rm det}(M)|=\epsilon^{q-1}$ since ${\rm rk}(B)=1-v+e=1-2+q=q-1$.

\vspace{0.3cm}

Suppose now that the leading coefficient of $\nabla _L(z)$ is $\pm 1$, and $L$ has a special alternating diagram $D$. Then $D$ is the connected sum of diagrams $D_1, \dots , D_r$ where each $D_i$ is a diagram (of a link $L_i$) such that its Seifert graph has only one (homogeneous) block (see Figure~\ref{SpecialAlternatingDiagramF}).
\begin{figure}[ht!]
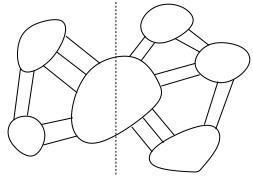

\labellist
\small
\endlabellist
  \begin{center}
  \SpecialAlternatingDiagram
  \end{center}
\caption{A special alternating diagram}\label{SpecialAlternatingDiagramF}
\end{figure}

Clearly, $L=\sharp_{i=1}^rL_i$. Since $\nabla_L(z)=\prod_{i=1}^r\nabla_{L_i}(z)$ and $\nabla_L(z) \in Z[z^{\pm 1}]$, the leading coefficient of each $\nabla_{L_i}(z)$ is $\pm 1$. Hence it is enough to prove that $L$ is a $(2,q)$-torus link assuming that the leading coefficient of $\nabla_L(z)$ is $\pm 1$, and $L$ has a diagram $D$ whose associated Seifert graph is a homogeneous block $B$, let say with sign $\epsilon$.

We will prove that $B$ has the form of the graph shown on the right in Figure~\ref{ClaimExtraF}, by induction on the number of edges of $B$. With this aim, we order the $r$ bounded regions of $B$ as in the proof of Theorem~\ref{MatrixPropertiesTheorem}. The corresponding Seifert matrix $A$ is then $\epsilon$-signed and sard, and by the proof of Theorem~\ref{DeterminantTheorem}, we have
$$
{\rm det}(A)=(-\epsilon )^r(|x|+|\lambda ||y|)
$$
where $y={\rm det}(A_1^1)\not= 0$. Since the leading coefficient of $\nabla_L(z)$ is $\pm 1$, we have ${\rm det}(A)=\pm 1$; since $\lambda$ and $y$ are nonzero integers, we have $y={\rm det}(A_1^1)=\pm 1$.

Now, according to the proof of Theorem~\ref{MatrixPropertiesTheorem}, $A_1^1$ is the Seifert matrix associated to the diagram $D'$ whose Seifert graph is $B'=B\setminus(R_1 \cap R_{r+1})$, where $R_{r+1}$ is the unbounded region of $B$. Since $B'$ is still a homogeneous block, by induction we have that $B'$ has the form of the graph shown on the right in Figure~\ref{SpecialAlternatingDiagramF}, and $B$ adds a path connecting the two vertices of $B'$ in the unbounded region of $B'$, as shown in Figure~\ref{FormOfBF}. 
\begin{figure}[ht!]
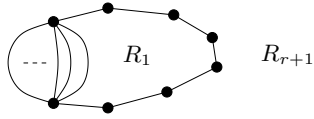

\labellist
\small
\pinlabel {$R_1$} at 260 115
\pinlabel {$R_{r+1}$} at 560 115
\endlabellist
  \begin{center}
  \FormOfB
  \end{center}
\caption{The graph $B$}\label{FormOfBF}
\end{figure}

Let $2k$ be the number of edges bounding $R_1$ in $B$. Then the original Seifert matrix is 
$$
A=
\left( 
	\begin{array}{c|c}
		k & \pm 1 \ 0 \ \cdots \ 0 \\ 
		\hline 
		\begin{array}{c} 0 \\ \vdots \\ 0 \end{array} & A_1^1 
	\end{array}
\right)
$$ 
or its transpose, in any case with determinant $\pm k$. Hence $k=1$ and the result follows.
\end{proof}

\begin{corollary} Let $L$ be an oriented homogeneous link. Then the leading coefficient of $\nabla _L(z)$ is $\pm 1$ if and only if $L$ is the Murasugi sum of connected sums of $(2,q)$-torus links.
\end{corollary}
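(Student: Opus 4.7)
The plan is to bootstrap the Claim, which handles special alternating diagrams, up to all homogeneous links by using the block decomposition of a homogeneous diagram of $L$ together with the fact (Remark~2) that the projection surface $F$ is the Murasugi sum of the pieces $F_i$ associated to the blocks $B_i$ of the Seifert graph. Combined with Cromwell's theorem, the leading coefficient factors along the block decomposition, so the problem localizes to each block, which is exactly the Claim's territory.

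For the ``only if'' direction I would fix any homogeneous diagram $D$ of $L$ with block decomposition $G = B_1 \cup \cdots \cup B_k$. Each $B_i$, being a single homogeneous block, is the Seifert graph of a special alternating diagram of some link $L_i$, and Remark~2 realizes $L$ as the Murasugi sum $L_1 \ast \cdots \ast L_k$. Cromwell's theorem gives the leading coefficient of $\nabla_L(z)$ as $\prod_{i=1}^k \epsilon_i^{r_i}|\det(M_i)|$, and the same theorem applied separately to each $L_i$ yields $\epsilon_i^{r_i}|\det(M_i)|$ as the leading coefficient of $\nabla_{L_i}(z)$. Under the hypothesis that $\nabla_L(z)$ has leading coefficient $\pm 1$, each $|\det(M_i)| = 1$, so each $\nabla_{L_i}(z)$ has leading coefficient $\pm 1$, and the Claim identifies each $L_i$ as a connected sum of $(2,q)$-torus links.

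For the ``if'' direction I would start from $L = L_1 \ast \cdots \ast L_k$ with each $L_i$ a connected sum of $(2,q)$-torus links. Each $L_i$ has a standard homogeneous diagram whose Seifert graph is a chain of the ``two vertices with $q$ parallel edges'' blocks appearing on the right in Figure~\ref{ClaimExtraF}. Realizing the iterated Murasugi sum at the diagram level by identifying the relevant Seifert discs produces a homogeneous diagram $D$ of $L$ whose Seifert graph is obtained from the constituent Seifert graphs by identifying single vertices, so the overall block decomposition is exactly the union of the block decompositions of the constituents, with no new block created. Cromwell's theorem applied to $D$ then gives the leading coefficient of $\nabla_L(z)$ as a product of contributions $\epsilon^r|\det(M)| = \pm 1$ over all the $(2,q)$-blocks, the last equality being the direct determinant computation already carried out in the proof of the Claim.

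The main obstacle I expect is the diagrammatic verification required in the ``if'' direction: checking that a Murasugi sum of two homogeneous diagrams along a disc inherits a homogeneous diagram whose Seifert graph is precisely the union of the constituent Seifert graphs glued at a single cut vertex. This amounts to tracing the Murasugi sum through the projection-surface construction of Section~\ref{triangularS} and ensuring that the identified disc is a Seifert disc on both sides, so that its associated vertex separates the two subgraphs without merging any blocks. This is the only step where geometric content is needed beyond the formal bookkeeping provided by Cromwell's formula and the Claim.
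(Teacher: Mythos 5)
Your ``only if'' direction is correct and is the argument the paper intends: Cromwell's formula (equivalently, Theorem~\ref{DiagonalMatrixTheorem} plus Theorems~\ref{MatrixPropertiesTheorem} and~\ref{DeterminantTheorem}) forces $|\det M_i|=1$ for every block, each block is the Seifert graph of a special alternating diagram of a link $L_i$ to which the Claim applies, and the Remark on Murasugi sums in Section~\ref{triangularS} exhibits $L$ as the Murasugi sum of the $L_i$'s.

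The gap is in the ``if'' direction. You treat ``$L$ is the Murasugi sum of connected sums of $(2,q)$-torus links'' as if it came with a diagrammatic presentation in which the gluing disc is a common Seifert disc, so that the Seifert graphs are simply wedged at a cut vertex. But the Murasugi sum is a geometric operation on Seifert surfaces: the summands are glued along a $2n$-gon lying in a splitting sphere, and the isotopy class of that disc inside each surface, together with the interleaving pattern of its boundary arcs, is part of the data. Different gluing data on the same summands produce different links (already two Hopf bands can be plumbed to give distinct knots), and none of this data is visible in, or recoverable from, the vertex-gluing of standard diagrams you describe; in particular the diagram you build need not be a diagram of the given link $L$, and showing that an arbitrary Murasugi sum of these surfaces can be isotoped into your special diagrammatic position is the real content, not a routine verification as your closing paragraph suggests. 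The clean way to close the gap bypasses diagrams: for any Murasugi sum $F=F_1\ast\cdots\ast F_k$ the pieces lie in complementary balls, so the same linking-number argument as in the proof of Theorem~\ref{DiagonalMatrixTheorem} makes the Seifert matrix of $F$ block triangular with the matrices $M_i$ of the summand surfaces on the diagonal; each summand is the projection surface of a connected sum of $(2,q)$-torus links, so $\det M_i=\pm1$ by the computation in the Claim, hence $\det M=\pm1\neq 0$, the degree of $\nabla_L(z)$ equals $\operatorname{rk}H_1(F)$, and the leading coefficient is $(-1)^{\operatorname{rk}H_1(F)}\det M=\pm1$. Note that homogeneity of $L$ is not even needed in this direction.
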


\section{Homogeneous knots of genus one}\label{ClassificationS}
We finish the paper with a complete classification of the family of homogeneous knots of genus one. Let $D$ be a homogeneous diagram of a homogeneous knot $K$ of genus one. Let $F$ and $G$ be respectively the projection surface and the Seifert graph associated to the diagram $D$. We already know that the genus of $F$ is exactly the genus of the knot. Since $2g(F)+\mu -1 ={\rm rk }(G)$ and $K$ is a link with one component, we deduce that $G$ has rank two. Figure \ref{FHomogeneousGraphs} exhibits the two types of such graphs; they can be denoted by $G(a,b,c)$ and $G(m,k)$ respectively, where the absolute values of the integers $a,b,c,m,k$ are the numbers of corresponding edges, and their signs are the signs of these edges. 

\begin{figure}[ht!]
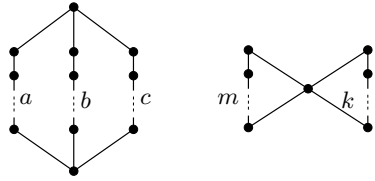

\labellist
\small
\pinlabel {$a$} at 25 115
\pinlabel {$b$} at 115 115
\pinlabel {$c$} at 205 115
\pinlabel {$m$} at 330 115
\pinlabel {$k$} at 510 115
\endlabellist
  \begin{center}
\GenusTwoHomogeneousGraphOneBlock \qquad \qquad \GenusTwoHomogeneousGraphTwoBlocks
 \end{center}
\caption{Homogeneous graphs with rank two: one and two blocks}\label{FHomogeneousGraphs}
\end{figure}

Note that these graphs could have some tails, but this would not affect to the knot type. Since $G(a,b,c)$ is homogeneous and has only one block, $a,b,c$ must have all the same sign; since $F$ is orientable, they have also the same parity. On the contrary, $G(m,k)$ has two blocks, hence $m$ and $k$ can have different signs, but both must be even because of the orientability. Note also that the second graph can be considered a degenerated form of the first one, with $b=0$.

In general, the Seifert graph does not determine the link where it comes from, although in the first case it does. In $G(a,b,c)$ there are exactly two trivalent vertices; the corresponding Seifert circles can be one inside the other, or separated. When viewed this in the sphere $S^2$ there is no difference, and the corresponding knot is the pretzel knot with diagram $P(a,b,c)$. Moreover, since $P(a,b,c)$ must be a knot, the numbers $a,b,c$ should be all odd, or exactly one of them should be even. It follows that all of them are odd.

Consider now the graph with two blocks $G(m,k)$. There is only one vertex with valence four, given the two possible configurations for the Seifert circles shown in Figure \ref{FConfigurations}.

\begin{figure}[ht!]
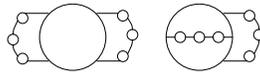

  \begin{center}
\ConfigurationsSeifertGraphsTwoBlocks
 \end{center}
\caption{Possible configurations of the Seifert circles for $G(m,k)$ (examples with $|m|=|k|=4$)}\label{FConfigurations}
\end{figure}

The first configuration corresponds to a link with three components, and the second one corresponds to a knot $K$ (Figure \ref{FOneBlock}). Moreover, the obtained knot $K$ is also a pretzel knot, given by the pretzel diagram $D(m,k)=P(m,\epsilon , \stackrel{|k|}{\dots}, \epsilon)$, where $m$ and $k$ are even integers and $\epsilon$ is the sign of $k$. For example, $D(4,-2)=P(4,-1,-1)$ is the example in Figure \ref{FOneBlock}, right.
\begin{figure}[ht!]
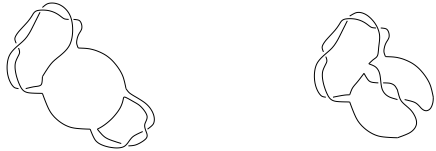

  \begin{center}
\OneBlockNoKnot \qquad \qquad \qquad \OneBlockYesKnot
 \end{center}
\caption{Links coming from rank two homogeneous graph with two blocks: only the second one is a knot ($D(4,-2)$ in the example)}\label{FOneBlock}
\end{figure}

What we have done is to prove the following result:

\begin{theorem}\label{classification}
A genus one knot is homogeneous if and only if it belongs to one of the two following classes of knots:
\begin{enumerate}
  \item Pretzel knots with diagram $P(a,b,c)$, where $a,b,c$ are odd integers with the same sign.
  \item Pretzel knots with diagram $D(m,k)=P(m,\epsilon , \stackrel{|k|}{\dots}, \epsilon)$, where $m$ and $k$ are non-zero even integers and $\epsilon =k/|k|$ is the sign of $k$.
\end{enumerate}
\end{theorem}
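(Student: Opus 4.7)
The plan is to reduce the question to a classification of the rank two Seifert graphs that can arise from a homogeneous diagram of a knot, then to recover the knot type from each such graph; the converse direction is immediate from the explicit diagrams listed in the statement.

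Starting from a homogeneous diagram $D$ of a genus one homogeneous knot $K$, Cromwell's theorem (proved in the previous sections via Theorems~\ref{DiagonalMatrixTheorem} and~\ref{DeterminantTheorem}) guarantees that the projection surface $F$ has minimal genus, so $g(F)=1$. Since $K$ has a single component, $\mathrm{rk}(G)=2g(F)+\mu-1=2$, so the Seifert graph $G$ has rank two. I would then enumerate the connected planar graphs of rank two up to the addition of trees (``tails''), since a tail does not alter the knot obtained from the diagram. Such a reduced graph is either a theta-graph (three internally disjoint paths joining one pair of vertices) or the wedge of two cycles (two bundles of parallel edges sharing a single vertex); allowing multiple edges, this yields exactly the two models $G(a,b,c)$ and $G(m,k)$ of Figure~\ref{FHomogeneousGraphs}.

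Next I would impose the constraints dictated by homogeneity, orientability, and the single-component condition. For $G(a,b,c)$, which is a single block, homogeneity forces $a$, $b$, $c$ to share a common sign, while the bipartiteness of the Seifert graph (equivalent to the orientability of $F$) forces every cycle to have even length, and thus $a$, $b$, $c$ to share a common parity; the single-component condition then rules out the all-even case, leaving all three odd. For $G(m,k)$ the two blocks need not share a sign, but orientability again forces each of $m$, $k$ to be even, and neither can vanish since each block must contain a cycle.

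The main obstacle is then reconstructing the knot from the abstract signed planar graph. Each trivalent vertex of $G(a,b,c)$ corresponds to a Seifert circle, and the two resulting circles may be either nested or disjoint; both configurations become equivalent when viewed on $S^{2}$, and each yields the pretzel diagram $P(a,b,c)$. For $G(m,k)$ the unique valence-four vertex admits two essentially different placements relative to the rest of the diagram, shown in Figure~\ref{FConfigurations}; the first produces a three-component link, whereas the second gives a knot realized by the pretzel diagram $D(m,k)=P(m,\epsilon,\stackrel{|k|}{\dots},\epsilon)$. This completes the forward direction, and the converse follows by inspecting that each diagram listed is indeed homogeneous with a genus one projection surface.
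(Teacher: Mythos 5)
Your proposal is correct and follows essentially the same route as the paper: use minimality of the projection surface (Cromwell's theorem, reproved earlier) to get a rank two Seifert graph, reduce up to tails to the two models $G(a,b,c)$ and $G(m,k)$, impose homogeneity, bipartiteness and the one-component condition, and then analyse the possible planar configurations of Seifert circles to recover the pretzel diagrams $P(a,b,c)$ and $D(m,k)$. The paper's argument in Section~\ref{ClassificationS} proceeds in exactly this way, so no further comparison is needed.
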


The given classification and some partial information of the Jones polynomial allow us to give another proof of the following result, due to Cromwell \cite{PeterPaper}:

\begin{corollary}\label{coro1}
Pretzel knots $P(p,-q,-r)$ with $3 \leq p \leq q \leq r$, all of them odd, are not homogeneous.
\end{corollary}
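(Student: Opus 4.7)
The plan is to apply Theorem~\ref{classification}, which classifies genus one homogeneous knots. The proof proceeds in three stages.

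First, I would establish that $K = P(p,-q,-r)$ has Seifert genus exactly one for $3 \leq p \leq q \leq r$ odd. The canonical Seifert surface coming from the pretzel diagram has genus $(p+q+r-1)/2$, but the mixed sign pattern $(+,-,-)$ admits a standard surgery producing a once-punctured torus spanning $K$; combined with the fact that the Alexander polynomial has degree $2$, this pins $g(K) = 1$ exactly.

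With $g(K) = 1$ in hand, Theorem~\ref{classification} forces $K$ to belong to one of two families: either (i) $K = P(a,b,c)$ with $a,b,c$ odd and of the same sign, or (ii) $K = D(m,k)$ for nonzero even integers $m,k$. In case (i) the Seifert graph $G(a,b,c)$ has rank $|a|+|b|+|c|-1$, which must equal $2$, so $|a|=|b|=|c|=1$; hence $K$ would be the trefoil or its mirror. The trefoil is immediately excluded, for instance by comparing the Jones polynomial span ($3$ for the trefoil) against the larger span forced for $K$ by the Kauffman bracket applied to its $p+q+r \geq 9$ nontrivial crossings.

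For case (ii), I would rule out each $D(m,k)$ by extracting partial information from the Jones polynomial. Since $D(m,k)$ has Seifert graph consisting of two blocks joined at a single vertex, its Kauffman bracket factorises through each block and yields a characteristic shape for the extremal coefficients of $V_{D(m,k)}(t)$, depending only on $m$ and $k$. Computing the extremal coefficients of $V_K(t)$ by expanding the Kauffman bracket over the three pretzel tangles of $P(p,-q,-r)$, one then checks that the resulting pattern cannot match that of any $D(m,k)$.

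The main obstacle is the uniform treatment of case (ii): since $(m,k)$ ranges over all pairs of nonzero even integers, I would need a structural feature of the Jones polynomial---most naturally an extremal coefficient or a discrepancy in its span---that distinguishes $P(p,-q,-r)$ from every member of the $D(m,k)$ family simultaneously. Carrying out this Kauffman bracket bookkeeping carefully is the heart of the argument, and is the ``partial information of the Jones polynomial'' alluded to in the introduction.
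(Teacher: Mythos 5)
Your overall strategy --- show $g(K)=1$, invoke Theorem~\ref{classification}, and then use partial Jones polynomial data to exclude both families --- is exactly the paper's strategy, and your genus computation and your plan for family (ii) are in the right spirit. But your treatment of family (i) contains a genuine error that collapses the hardest part of the argument. The rank of the Seifert graph $G(a,b,c)$ is \emph{not} $|a|+|b|+|c|-1$: the graph consists of two trivalent vertices joined by three paths containing $|a|$, $|b|$, $|c|$ edges respectively, so it has $|a|+|b|+|c|-1$ vertices and $|a|+|b|+|c|$ edges, hence rank $1-v+e=2$ for \emph{every} choice of $a,b,c$. Consequently family (i) is the infinite family of all pretzel knots $P(a,b,c)$ with $a,b,c$ odd and of one sign (e.g.\ $P(3,3,3)$, $P(1,3,5)$, \dots), not just the trefoil --- indeed, if your reduction to $|a|=|b|=|c|=1$ were correct it would contradict Theorem~\ref{classification} itself, which asserts that all these knots are homogeneous of genus one. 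So excluding the trefoil by a span count does not finish case (i); you must distinguish $P(p,-q,-r)$ from every $P(a,b,c)$ with same-sign odd entries.

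The paper does this with two specific invariants taken from the Kauffman-bracket computation of pretzel links in \cite{pretzel}: the minimal degree of the Jones polynomial and the sign of its top coefficient. For $P(a,b,c)$ with all entries positive the minimal degree is $-3/2-a-b-c$, far below the value $1/2$ (resp.\ $-1/2$) occurring for $P(p,-q,-r)$ with $p<q$ (resp.\ $p=q$); for $P(a,b,c)$ with all entries negative the minimal degree is again $1/2$ but the leading coefficient is $+1$, whereas for $P(p,-q,-r)$ with $p<q$ it is $-1$. The same two invariants dispose of all four sign patterns of $D(m,k)$. Your proposal would be repaired by replacing the rank argument with this kind of uniform comparison of extremal Jones data across the whole of family (i), in parallel with what you already intend to do for family (ii).
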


In the original proof, Cromwell calculated the Homfly polynomial $P(v,z)=\sum _{i=0}^r \alpha_i(v)z^i$ and checked that $\alpha _r(v)$ contains terms of both signs (\cite{PeterPaper}, Theorem~10). But, for homogeneous links, these coefficients are all non-negative or all non-positive, according to a result by Traczyk (\cite{PeterPaper}, Corollary~{4.3}).

\begin{proof}
We want to prove that the knot $K$ defined by a pretzel diagram $P(p,-q,-r)$ is not homogeneous. First note that $K$ has genus one, since the projection surface defined by the diagram $P(p,-q,-r)$ has Euler characteristic~$-1$, hence genus one, and $K$ is not the trivial knot; for example, according to Theorem 2, case (iv) (a) in \cite{pretzel}, the span of its Jones polynomial (with normalization $-t^{-\frac{1}{2}}-t^{\frac{1}{2}}$) is $p+q+r-{\rm min}\{ p, q-1 \}$, which is different from one since $3 \leq p \leq q,r$.

Now, the lowest degree and the coefficient of the highest degree term of the Jones polynomial tell us that $K$ does not belong to any of the two classes of homogeneous knots of genus one given by Theorem~\ref{classification}, as the following table shows:
$$
\begin{array}{|c|c|c|c|}
\hline
\multicolumn{2}{|c|}{\text{Knot diagram}} & \text{Lowest degree} & \begin{array}{c} \text{Coefficient of the} \\ \text{highest degree term} \end{array} \\
\hline
\hline
\multirow{2}{*}{$P(p,-q,-r)$} & 3 \leq p <q \leq r & 1/2         & -1 \\
\cline{2-4}
                              & 3 \leq p =q \leq r & -1/2        &    \\
\hline
\hline
\multirow{2}{*}{$P(a,b,c)$}   & 0 \leq a,b,c    & -3/2-a-b-c  &    \\
\cline{2-4}
                              & a,b,c \leq 0    & 1/2         & 1  \\
\hline
\hline
\multirow{4}{*}{$D(m,k)$}     & m,k>0           & -m-1/2      &    \\
\cline{2-4}
                              & m<0, k>0        & 1/2         & 1  \\
\cline{2-4}
                              & m>0, k<0        & k-m-1/2     &    \\
\cline{2-4}
                              & m,k<0           & k-1/2       &    \\
\hline
\end{array}
$$
\end{proof}

Note that the Conway polynomial together with the span of the Jones polynomial are not enough in order to prove Corollary~\ref{coro1}. According to the values displayed in the following table, we have for example that the knots defined by the diagrams $P(3,-45,-91)$ and $P(11,23,101)$ share Conway polynomial and the span of their Jones polynomials, and the same happens to the pair of knots defined by the diagrams $P(11,-15,-15)$ and $D(-4,26)$.
$$
\begin{array}{|c|c|c|c|}
\hline
\multicolumn{2}{|c|}{\text{Knot diagram}} & \begin{array}{c} \text{Conway polynomial } \\ 1+\lambda z^2 \text{ where } \lambda \text{ is } \end{array} & \text{Jones polynomial span} \\
\hline
\hline
\multirow{2}{*}{$P(p,-q,-r)$} & 3 \leq p <q \leq r & \multirow{2}{*}{$(qr-pq-pr+1)/4$}  & q+r \\
\cline{2-2}\cline{4-4}
                              & 3 \leq p =q \leq r &                                    & q+r+1 \\
\hline
\hline
\multirow{2}{*}{$P(a,b,c)$}   & 0 \leq a,b,c    & \multirow{2}{*}{$(ab+ac+bc+1)/4$}  & 1+a+b+c \\
\cline{2-2}\cline{4-4}
                              & a,b,c \leq 0    &                                    & 1-a-b-c \\
\hline
\hline
\multirow{4}{*}{$D(m,k)$}     & m,k>0           & \multirow{4}{*}{$mk/4$}            & 1+m+k \\
\cline{2-2}\cline{4-4}
                              & m<0, k>0        &                                    & k-m \\
\cline{2-2}\cline{4-4}
                              & m>0, k<0        &                                    & m-k \\
\cline{2-2}\cline{4-4}
                              & m,k<0           &                                    & 1-m-k \\
\hline
\end{array}
$$

We also have the following result (as above, the Jones polynomial of the pretzel links and their spans have been calculated following~\cite{pretzel}):
\begin{corollary}\label{coro2}
At least one of the extreme coefficients of the Jones polynomial of a homogeneous knot of genus one is $-1$.
\end{corollary}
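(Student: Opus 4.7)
The plan is to combine the classification of Theorem~\ref{classification} with the explicit Jones polynomial data for pretzel links from~\cite{pretzel}, completing in each subcase the table of extreme coefficients that was used in the proof of Corollary~\ref{coro1}. The corollary will then reduce to the observation that in every row of this extended table, at least one of the two extreme coefficients equals $-1$.

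First I would apply Theorem~\ref{classification}, so that it suffices to verify the claim for knots with a pretzel diagram $P(a,b,c)$ (with $a,b,c$ odd integers of the same sign) or with a diagram $D(m,k)$ (with $m,k$ non-zero even integers). For each sign pattern of the parameters I would determine \emph{both} extreme coefficients of the Jones polynomial, rather than only the single one recorded in the Corollary~\ref{coro1} table. A significant economy is afforded by the identity $V_{\bar K}(t)=V_K(t^{-1})$ together with the fact that mirroring a pretzel diagram is the same as negating every parameter (so $\overline{P(a,b,c)}=P(-a,-b,-c)$ and $\overline{D(m,k)}=D(-m,-k)$): the pair of extreme coefficients of a subcase and its mirror are swapped, so it suffices to treat one representative of each mirror pair. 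Up to mirroring, the cases left to handle are $P(a,b,c)$ with $a,b,c\le 0$, $D(m,k)$ with $m<0$ and $k>0$, and $D(m,k)$ with $m,k>0$.

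For the first two of these subcases, the Corollary~\ref{coro1} table already gives $+1$ for the leading coefficient, so I would use the formulas of~\cite{pretzel} to compute the trailing coefficient and verify that it equals $-1$. For the remaining case $D(m,k)$ with $m,k>0$, the diagram is alternating and~\cite{pretzel} gives explicit leading and trailing coefficients; a direct inspection then shows that at least one of the two is $-1$. The mirror equivalence completes the case check for the other three sign patterns.

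The step I expect to be most delicate is the subcase $D(m,k)$ with $m$ and $k$ of opposite signs, since there the diagram is homogeneous but not alternating and one cannot appeal to the standard fact that the extreme Jones coefficients of a reduced alternating diagram are $\pm 1$ with signs dictated by the checkerboard colouring. In that situation the sign of each extreme coefficient must be read directly from the closed form of~\cite{pretzel}, where it is controlled by the parities of $|m|$ and $|k|$ together with the sign contributions of the two tangles. The computation is routine but finicky, and it is the main obstacle in the proof.
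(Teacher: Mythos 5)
Your proposal is correct and essentially the paper's own argument: the paper deduces Corollary~\ref{coro2} from the classification in Theorem~\ref{classification} together with explicit computations of the Jones polynomials (and their extreme terms) of the pretzel diagrams $P(a,b,c)$ and $D(m,k)$ following~\cite{pretzel}, which is exactly the sign-pattern case check you describe, extending the table used for Corollary~\ref{coro1}. Your appeal to the mirror symmetry $V_{\overline{K}}(t)=V_K(t^{-1})$ to halve the cases is a minor economy rather than a genuinely different route.
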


Finally, we would like to remark that Stoimenow \cite{Stoimenow} has showed that a genus two homogeneous knot is alternating or positive. In \cite{Jong} Jong and Kishimoto have studied genus two positive knots extensively.

\section*{Acknowledgments}
I am grateful to Prof. Hugh R. Morton for several helpful comments on a previous version of this paper. I am also grateful to the referee for a number of helpful suggestions for the article's improvement. An overview of this work was presented at Knots in Poland III Conference; I am grateful to its organizers for their hospitality.

The author is partially supported by Spanish Project MTM2010-19355 and FEDER.

\ \\ 
\noindent Pedro M. Gonz\'alez Manch\'on \\
Department of Applied Mathematics \\
Ronda de Valencia 3 \\
EUITI-UPM \\
28012 Madrid (Spain)\\
\ \\
pedro.gmanchon@upm.es \\
http://gestion.euiti.upm.es/index/departamentos/matematicas/manchon/index.htm \\

\end{document}